\newcommand{\nc}{\newcommand}
\nc{\md}{\operatorname{-}}
\renewcommand{\AA}{{\mathbb{A}}}
\nc{\CC}{{\mathbb{C}}}
\nc{\DD}{{\mathbb{D}}}
\nc{\LL}{{\mathbb{L}}}
\nc{\RR}{{\mathbb{R}}}
\renewcommand{\P}{{\mathbb{P}}}
\nc{\OO}{{\mathbb{O}}}
\renewcommand{\SS}{{\mathbb{S}}}
\nc{\QQ}{{\mathbb{Q}}}
\nc{\ZZ}{{\mathbb{Z}}}
\nc{\Z}{{\mathbb{Z}}}
\nc{\cA}{{\mathcal{A}}}
\nc{\cB}{{\mathcal{B}}}
\nc{\cC}{{\mathcal{C}}}
\nc{\cD}{{\mathcal{D}}}
\nc{\cE}{{\mathcal{E}}}
\nc{\cF}{{\mathcal{F}}}
\nc{\cG}{{\mathcal{G}}}
\nc{\cH}{{\mathcal{H}}}
\nc{\cI}{{\mathcal{I}}}
\nc{\cJ}{{\mathcal{J}}}
\nc{\cK}{{\mathcal{K}}}
\nc{\cL}{{\mathcal{L}}}
\nc{\cM}{{\mathcal{M}}}
\nc{\cN}{{\mathcal{N}}}
\nc{\cO}{{\mathcal{O}}}
\nc{\cP}{{\mathcal{P}}}
\nc{\cQ}{{\mathcal{Q}}}
\nc{\cR}{{\mathcal{R}}}
\nc{\cS}{{\mathcal{S}}}
\nc{\cT}{{\mathcal{T}}}
\nc{\cU}{{\mathcal{U}}}
\nc{\cV}{{\mathcal{V}}}
\nc{\cW}{{\mathcal{W}}}
\nc{\cX}{{\mathcal{X}}}
\nc{\cY}{{\mathcal{Y}}}
\nc{\cZ}{{\mathcal{Z}}}
\nc{\rc}{{\mathrm{c}}}
\nc{\rd}{{\mathrm{d}}}
\nc{\rf}{{\mathrm{f}}}
\nc{\rh}{{\mathrm{h}}}
\nc{\rs}{{\mathrm{s}}}
\nc{\rch}{{\mathrm{ch}}}
\nc{\rtd}{{\mathrm{td}}}
\nc{\rA}{{\mathrm{A}}}
\nc{\rB}{{\mathrm{B}}}
\nc{\rC}{{\mathrm{C}}}
\nc{\rD}{{\mathrm{D}}}
\nc{\rE}{{\mathrm{E}}}
\nc{\rF}{{\mathrm{F}}}
\nc{\rG}{{\mathrm{G}}}
\nc{\rH}{{\mathrm{H}}}
\nc{\rI}{{\mathrm{I}}}
\nc{\rJ}{{\mathrm{J}}}
\nc{\rK}{{\mathrm{K}}}
\nc{\rL}{{\mathrm{L}}}
\nc{\rM}{{\mathrm{M}}}
\nc{\rN}{{\mathrm{N}}}
\nc{\rO}{{\mathrm{O}}}
\nc{\rP}{{\mathrm{P}}}
\nc{\rQ}{{\mathrm{Q}}}
\nc{\rR}{{\mathrm{R}}}
\nc{\rS}{{\mathrm{S}}}
\nc{\rT}{{\mathrm{T}}}
\nc{\rU}{{\mathrm{U}}}
\nc{\rV}{{\mathrm{V}}}
\nc{\rW}{{\mathrm{W}}}
\nc{\rX}{{\mathrm{X}}}
\nc{\rY}{{\mathrm{Y}}}
\nc{\rZ}{{\mathrm{Z}}}
\nc{\bA}{{\mathbf{A}}}
\nc{\bB}{{\mathbf{B}}}
\nc{\bC}{{\mathbf{C}}}
\nc{\bD}{{\mathbf{D}}}
\nc{\bE}{{\mathbf{E}}}
\nc{\bF}{{\mathbf{F}}}
\nc{\bG}{{\mathbf{G}}}
\nc{\bH}{{\mathbf{H}}}
\nc{\bI}{{\mathbf{I}}}
\nc{\bJ}{{\mathbf{J}}}
\nc{\bK}{{\mathbf{K}}}
\nc{\bL}{{\mathbf{L}}}
\nc{\bM}{{\mathbf{M}}}
\nc{\bN}{{\mathbf{N}}}
\nc{\bO}{{\mathbf{O}}}
\nc{\bP}{{\mathbf{P}}}
\nc{\bQ}{{\mathbf{Q}}}
\nc{\bR}{{\mathbf{R}}}
\nc{\bS}{{\mathbf{S}}}
\nc{\bT}{{\mathbf{T}}}
\nc{\bU}{{\mathbf{U}}}
\nc{\bV}{{\mathbf{V}}}
\nc{\bW}{{\mathbf{W}}}
\nc{\bX}{{\mathbf{X}}}
\nc{\bY}{{\mathbf{Y}}}
\nc{\bZ}{{\mathbf{Z}}}
\nc{\ba}{{\mathbf{a}}}
\nc{\bb}{{\mathbf{b}}}
\nc{\bc}{{\mathbf{c}}}
\nc{\bd}{{\mathbf{d}}}
\nc{\be}{{\mathbf{e}}}
\nc{\bg}{{\mathbf{g}}}
\nc{\bh}{{\mathbf{h}}}
\nc{\bi}{{\mathbf{i}}}
\nc{\bj}{{\mathbf{j}}}
\nc{\bk}{{\mathbf{k}}}
\nc{\bl}{{\mathbf{l}}}
\nc{\bm}{{\mathbf{m}}}
\nc{\bn}{{\mathbf{n}}}
\nc{\bo}{{\mathbf{o}}}
\nc{\bp}{{\mathbf{p}}}
\nc{\bq}{{\mathbf{q}}}
\nc{\br}{{\mathbf{r}}}
\nc{\bs}{{\mathbf{s}}}
\nc{\bt}{{\mathbf{t}}}
\nc{\bu}{{\mathbf{u}}}
\nc{\bv}{{\mathbf{v}}}
\nc{\bw}{{\mathbf{w}}}
\nc{\bx}{{\mathbf{x}}}
\nc{\by}{{\mathbf{y}}}
\nc{\bz}{{\mathbf{z}}}
\nc{\fA}{{\mathfrak{A}}}
\nc{\fB}{{\mathfrak{B}}}
\nc{\fC}{{\mathfrak{C}}}
\nc{\fD}{{\mathfrak{D}}}
\nc{\fE}{{\mathfrak{E}}}
\nc{\fF}{{\mathfrak{F}}}
\nc{\fG}{{\mathfrak{G}}}
\nc{\fH}{{\mathfrak{H}}}
\nc{\fI}{{\mathfrak{I}}}
\nc{\fJ}{{\mathfrak{J}}}
\nc{\fK}{{\mathfrak{K}}}
\nc{\fL}{{\mathfrak{L}}}
\nc{\fM}{{\mathfrak{M}}}
\nc{\fN}{{\mathfrak{N}}}
\nc{\fO}{{\mathfrak{O}}}
\nc{\fP}{{\mathfrak{P}}}
\nc{\fQ}{{\mathfrak{Q}}}
\nc{\fR}{{\mathfrak{R}}}
\nc{\fS}{{\mathfrak{S}}}
\nc{\fT}{{\mathfrak{T}}}
\nc{\fU}{{\mathfrak{U}}}
\nc{\fV}{{\mathfrak{V}}}
\nc{\fW}{{\mathfrak{W}}}
\nc{\fX}{{\mathfrak{X}}}
\nc{\fY}{{\mathfrak{Y}}}
\nc{\fZ}{{\mathfrak{Z}}}
\nc{\fa}{{\mathfrak{a}}}
\nc{\fb}{{\mathfrak{b}}}
\nc{\fc}{{\mathfrak{c}}}
\nc{\fd}{{\mathfrak{d}}}
\nc{\fe}{{\mathfrak{e}}}
\nc{\ff}{{\mathfrak{f}}}
\nc{\fg}{{\mathfrak{g}}}
\nc{\fh}{{\mathfrak{h}}}
\nc{\fj}{{\mathfrak{j}}}
\nc{\fk}{{\mathfrak{k}}}
\nc{\fl}{{\mathfrak{l}}}
\nc{\fm}{{\mathfrak{m}}}
\nc{\fn}{{\mathfrak{n}}}
\nc{\fo}{{\mathfrak{o}}}
\nc{\fp}{{\mathfrak{p}}}
\nc{\fq}{{\mathfrak{q}}}
\nc{\fr}{{\mathfrak{r}}}
\nc{\fs}{{\mathfrak{s}}}
\nc{\ft}{{\mathfrak{t}}}
\nc{\fu}{{\mathfrak{u}}}
\nc{\fv}{{\mathfrak{v}}}
\nc{\fw}{{\mathfrak{w}}}
\nc{\fx}{{\mathfrak{x}}}
\nc{\fy}{{\mathfrak{y}}}
\nc{\fz}{{\mathfrak{z}}}
\nc{\sA}{{\mathsf{A}}}
\nc{\sB}{{\mathsf{B}}}
\nc{\sC}{{\mathsf{C}}}
\nc{\sD}{{\mathsf{D}}}
\nc{\sE}{{\mathsf{E}}}
\nc{\sF}{{\mathsf{F}}}
\nc{\sG}{{\mathsf{G}}}
\nc{\sH}{{\mathsf{H}}}
\nc{\sI}{{\mathsf{I}}}
\nc{\sJ}{{\mathsf{J}}}
\nc{\sK}{{\mathsf{K}}}
\nc{\sL}{{\mathsf{L}}}
\nc{\sM}{{\mathsf{M}}}
\nc{\sN}{{\mathsf{N}}}
\nc{\sO}{{\mathsf{O}}}
\nc{\sP}{{\mathsf{P}}}
\nc{\sQ}{{\mathsf{Q}}}
\nc{\sR}{{\mathsf{R}}}
\nc{\sS}{{\mathsf{S}}}
\nc{\sT}{{\mathsf{T}}}
\nc{\sU}{{\mathsf{U}}}
\nc{\sV}{{\mathsf{V}}}
\nc{\sW}{{\mathsf{W}}}
\nc{\sX}{{\mathsf{X}}}
\nc{\sY}{{\mathsf{Y}}}
\nc{\sZ}{{\mathsf{Z}}}
\nc{\sa}{{\mathsf{a}}}
\nc{\sd}{{\mathsf{d}}}
\nc{\se}{{\mathsf{e}}}
\nc{\sg}{{\mathsf{g}}}
\nc{\sh}{{\mathsf{h}}}
\nc{\si}{{\mathsf{i}}}
\nc{\sj}{{\mathsf{j}}}
\nc{\sk}{{\mathsf{k}}}
\nc{\sm}{{\mathsf{m}}}
\nc{\sn}{{\mathsf{n}}}
\nc{\so}{{\mathsf{o}}}
\nc{\sq}{{\mathsf{q}}}
\nc{\sr}{{\mathsf{r}}}
\nc{\st}{{\mathsf{t}}}
\nc{\su}{{\mathsf{u}}}
\nc{\sv}{{\mathsf{v}}}
\nc{\sw}{{\mathsf{w}}}
\nc{\sx}{{\mathsf{x}}}
\nc{\sy}{{\mathsf{y}}}
\nc{\sz}{{\mathsf{z}}}
\nc{\oA}{{\overline{A}}}
\nc{\oB}{{\overline{B}}}
\nc{\oC}{{\overline{C}}}
\nc{\oD}{{\overline{D}}}
\nc{\oE}{{\overline{E}}}
\nc{\oF}{{\overline{F}}}
\nc{\oG}{{\overline{G}}}
\nc{\oH}{{\overline{H}}}
\nc{\oI}{{\overline{I}}}
\nc{\oJ}{{\overline{J}}}
\nc{\oK}{{\overline{K}}}
\nc{\oL}{{\overline{L}}}
\nc{\oM}{{\overline{M}}}
\nc{\oN}{{\overline{N}}}
\nc{\oO}{{\overline{O}}}
\nc{\oP}{{\overline{P}}}
\nc{\oQ}{{\overline{Q}}}
\nc{\oR}{{\overline{R}}}
\nc{\oS}{{\overline{S}}}
\nc{\oT}{{\overline{T}}}
\nc{\oU}{{\overline{U}}}
\nc{\oV}{{\overline{V}}}
\nc{\oW}{{\overline{W}}}
\nc{\oX}{{\overline{X}}}
\nc{\oY}{{\overline{Y}}}
\nc{\oZ}{{\overline{Z}}}
\nc{\oa}{{\overline{a}}}
\nc{\ob}{{\overline{b}}}
\nc{\oc}{{\overline{c}}}
\nc{\od}{{\overline{d}}}
\nc{\of}{{\overline{f}}}
\nc{\og}{{\overline{g}}}
\nc{\oh}{{\overline{h}}}
\nc{\oi}{{\overline{i}}}
\nc{\oj}{{\overline{j}}}
\nc{\ok}{{\overline{k}}}
\nc{\ol}{{\overline{l}}}
\nc{\om}{{\overline{m}}}
\nc{\on}{{\overline{n}}}
\nc{\oo}{{\overline{o}}}
\nc{\op}{{\overline{p}}}
\nc{\oq}{{\overline{q}}}
\nc{\os}{{\overline{s}}}
\nc{\ot}{{\overline{t}}}
\nc{\ou}{{\overline{u}}}
\nc{\ov}{{\overline{v}}}
\nc{\ow}{{\overline{w}}}
\nc{\ox}{{\overline{x}}}
\nc{\oy}{{\overline{y}}}
\nc{\oz}{{\overline{z}}}
\nc{\tA}{{\tilde{A}}}
\nc{\tB}{{\tilde{B}}}
\nc{\tC}{{\tilde{C}}}
\nc{\tD}{{\tilde{D}}}
\nc{\tE}{{\tilde{E}}}
\nc{\tF}{{\tilde{F}}}
\nc{\tG}{{\tilde{G}}}
\nc{\tH}{{\tilde{H}}}
\nc{\tI}{{\tilde{I}}}
\nc{\tJ}{{\tilde{J}}}
\nc{\tK}{{\tilde{K}}}
\nc{\tL}{{\tilde{L}}}
\nc{\tM}{{\tilde{M}}}
\nc{\tN}{{\tilde{N}}}
\nc{\tO}{{\tilde{O}}}
\nc{\tP}{{\tilde{P}}}
\nc{\tQ}{{\tilde{Q}}}
\nc{\tR}{{\tilde{R}}}
\nc{\tS}{{\tilde{S}}}
\nc{\tT}{{\tilde{T}}}
\nc{\tU}{{\tilde{U}}}
\nc{\tV}{{\tilde{V}}}
\nc{\tW}{{\tilde{W}}}
\nc{\tX}{{\tilde{X}}}
\nc{\tY}{{\tilde{Y}}}
\nc{\tZ}{{\tilde{Z}}}
\nc{\tfD}{{\tilde{\fD}}}
\nc{\tcA}{{\tilde{\cA}}}
\nc{\tcB}{{\tilde{\cB}}}
\nc{\tcC}{{\tilde{\cC}}}
\nc{\tcD}{{\tilde{\cD}}}
\nc{\tcE}{{\tilde{\cE}}}
\nc{\tcF}{{\tilde{\cF}}}
\nc{\tcM}{{\tilde{\cM}}}
\nc{\tcP}{{\tilde{\cP}}}
\nc{\tcT}{{\tilde{\cT}}}
\nc{\tcW}{{\widetilde{\cW}}}
\nc{\bcT}{\bar{\cT}}
\nc{\tphi}{{\tilde{\varphi}}}
\nc{\ta}{{\tilde{a}}}
\nc{\tb}{{\tilde{b}}}
\nc{\tc}{{\tilde{c}}}
\nc{\td}{{\tilde{d}}}
\nc{\te}{{\tilde{e}}}
\nc{\tf}{{\tilde{f}}}
\nc{\tg}{{\tilde{g}}}
\nc{\ti}{{\tilde{\imath}}}
\nc{\tj}{{\tilde{j}}}
\nc{\tk}{{\tilde{k}}}
\nc{\tl}{{\tilde{l}}}
\nc{\tm}{{\tilde{m}}}
\nc{\tn}{{\tilde{n}}}
\nc{\tp}{{\tilde{p}}}
\nc{\tq}{{\tilde{q}}}
\nc{\tr}{{\tilde{r}}}
\nc{\ts}{{\tilde{s}}}
\nc{\tu}{{\tilde{u}}}
\nc{\tv}{{\tilde{v}}}
\nc{\tw}{{\tilde{w}}}
\nc{\tx}{{\tilde{x}}}
\nc{\ty}{{\tilde{y}}}
\nc{\tz}{{\tilde{z}}}
\nc{\hA}{{\hat{A}}}
\nc{\hB}{{\hat{B}}}
\nc{\hC}{{\hat{C}}}
\nc{\hD}{{\hat{D}}}
\nc{\hE}{{\hat{E}}}
\nc{\hF}{{\hat{F}}}
\nc{\hG}{{\hat{G}}}
\nc{\hH}{{\hat{H}}}
\nc{\hI}{{\hat{I}}}
\nc{\hJ}{{\hat{J}}}
\nc{\hK}{{\hat{K}}}
\nc{\hL}{{\hat{L}}}
\nc{\hM}{{\hat{M}}}
\nc{\hN}{{\hat{N}}}
\nc{\hO}{{\hat{O}}}
\nc{\hP}{{\hat{P}}}
\nc{\hQ}{{\hat{Q}}}
\nc{\hR}{{\hat{R}}}
\nc{\hS}{{\hat{S}}}
\nc{\hT}{{\hat{T}}}
\nc{\hU}{{\hat{U}}}
\nc{\hV}{{\hat{V}}}
\nc{\hW}{{\hat{W}}}
\nc{\hX}{{\widehat{X}}}
\nc{\hY}{{\hat{Y}}}
\nc{\hZ}{{\hat{Z}}}
\nc{\ha}{{\hat{a}}}
\nc{\hb}{{\hat{b}}}
\nc{\hc}{{\hat{c}}}
\nc{\hd}{{\hat{d}}}
\nc{\he}{{\hat{e}}}
\nc{\hg}{{\hat{g}}}
\nc{\hh}{{\hat{h}}}
\nc{\hi}{{\hat{i}}}
\nc{\hj}{{\hat{j}}}
\nc{\hk}{{\hat{k}}}
\nc{\hl}{{\hat{l}}}
\nc{\hm}{{\hat{m}}}
\nc{\hn}{{\hat{n}}}
\nc{\ho}{{\hat{o}}}
\nc{\hp}{{\hat{p}}}
\nc{\hq}{{\hat{q}}}
\nc{\hr}{{\hat{r}}}
\nc{\hs}{{\hat{s}}}
\nc{\hu}{{\hat{u}}}
\nc{\hv}{{\hat{v}}}
\nc{\hw}{{\hat{w}}}
\nc{\hx}{{\hat{x}}}
\nc{\hy}{{\hat{y}}}
\nc{\hz}{{\hat{z}}}
\nc{\hcC}{{\widehat{\cC}}}
\nc{\hcT}{{\widehat{\cT}}}
\nc{\eps}{\upepsilon}
\nc{\lan}{\big\langle}
\nc{\ran}{\big\rangle}
\nc{\kk}{{\Bbbk}}
\nc{\io}{\upiota}
\nc{\Kr}{\mathsf{Kr}}
\nc{\cKr}{\mathcal{K}\!\mathit{r}}
\nc{\Dm}{\bD^{-}}
\nc{\Db}{\bD^{\mathrm{b}}}
\nc{\Dbc}{\bD^{\mathrm{b}}_{\mathrm{c}}}
\nc{\Dp}{\bD^{\mathrm{perf}}}
\nc{\Dperf}{\bD^{\mathrm{perf}}}
\nc{\Dqc}{\bD_{\mathrm{qc}}}
\nc{\Du}{\bD}
\nc{\Dsing}{\bD^{\mathrm{sg}}}
\nc{\Dg}{\bD^{\mathrm{sg}}}
\def\ol{\overline}
\nc{\Rn}{\rR_{\mathrm{node}}}
\nc{\Cn}{\cC_{\mathrm{node}}}
\nc{\Dfd}[1]{\bD_{\mathrm{fd}}(#1)}
\def\bw#1#2{\textstyle{\bigwedge\hskip-0.9mm^{#1}}\hskip0.2mm{#2}}
\nc{\xrightiso}[1]{ \xrightarrow[{\ \raisebox{0.5ex}[0ex][0ex]{$\sim$}\ }]{#1} }
\nc{\thick}{\mathbf{thick}}
\DeclareMathOperator{\Ext}{\mathrm{Ext}}
\DeclareMathOperator{\Bl}{\mathrm{Bl}}
\DeclareMathOperator{\Pic}{\mathrm{Pic}}
\DeclareMathOperator{\Sym}{\mathrm{Sym}}
\DeclareMathOperator{\Gr}{\mathrm{Gr}}
\DeclareMathOperator{\OGr}{\mathrm{OGr}}
\DeclareMathOperator{\OFl}{\mathrm{OFl}}
\DeclareMathOperator{\GL}{\mathbf{GL}}
\DeclareMathOperator{\SO}{\mathbf{SO}}
\DeclareMathOperator{\Spin}{\mathbf{Spin}}
\theoremstyle{plain}
\newtheorem{theorem}{Theorem}[section]
\newtheorem{lemma}[theorem]{Lemma}
\newtheorem{proposition}[theorem]{Proposition}
\newtheorem{corollary}[theorem]{Corollary}
\theoremstyle{definition}
\theoremstyle{remark}
\newtheorem{remark}[theorem]{Remark}
\title{Explicit deformation of the horospherical variety of type $\rG_2$}
\author{Alexander Kuznetsov}
\address{{\sloppy
\parbox{0.9\textwidth}{
Algebraic Geometry Section, Steklov Mathematical Institute of Russian Academy of Sciences,\\
8 Gubkin str., Moscow 119991 Russia
}\bigskip}}
\email{akuznet@mi-ras.ru}
\date{}
\thanks{This work was supported by the Russian Science Foundation under grant no. 19-11-00164,\\
\url{https://rscf.ru/en/project/19-11-00164/}.}
\begin{document}

\maketitle

\begin{abstract}
We give two simple geometric constructions of a smooth family of projective varieties
with central fiber isomorphic to the horospherical variety of type~$\rG_2$
and all other fibers isomorphic to the isotropic orthogonal Grassmannian~$\OGr(2,7)$
and discuss briefly the derived category of this family.
\end{abstract}

\section{Introduction}

Let~$\bG$ be a simple algebraic group of Dynkin type~$\rG_2$.
If~$V_1$ and~$V_2$ are the fundamental representations of~$\bG$ of dimensions~$7$ and~$14$ respectively,
the highest weight vector orbits in~$\P(V_1)$ and~$\P(V_2)$ are 
\begin{itemize}
\item 
$X_1 \subset \P(V_1)$ is a smooth quadric of dimension~$5$, and
\item 
$X_2 \subset \P(V_2)$ is the so-called {\sf adjoint variety} of~$\bG$.
\end{itemize}

In fact, $X_2$ can be realized as a subvariety of~$\Gr(2,V_1)$ (see Lemma~\ref{lem:chain} below for details),
and if~$\cU_{X_2}$ is the restriction to~$X_2$ of the tautological rank~$2$ subbundle from the Grassmannian, 
then
\begin{equation}
\label{eq:tx}
\tX \coloneqq \P_{X_2}(\cU_{X_2}) \xrightarrow{\ p_2\ } X_2.
\end{equation}
is the flag variety of~$\bG$;
in particular, the bundle~$\cU_{X_2}$ and the $\P^1$-fibration~$p_2$ are~$\bG$-equivariant.

Similarly, there is a $\bG$-equivariant vector bundle~$\cC_{X_1}$ of rank~$2$ on the quadric~$X_1$ such that
\begin{equation}
\label{eq:tx-x1}
\tX \cong \P_{X_1}(\cC_{X_1}) \xrightarrow{\ p_1\ } X_1
\end{equation} 
is a $\bG$-equivariant $\P^1$-fibration;
the bundle~$\cC_{X_1}$ is known as the {\sf Cayley bundle}.

The {\sf horospherical variety~$X$ of type~$\rG_2$} can be constructed out of these data in several ways.
We outline below three related constructions, for details see~\cite{Pas,GPPS}. 
Let~$H_1$ and~$H_2$ denote the ample generators of~$\Pic(X_1)$ and~$\Pic(X_2)$; 
abusing notation we will denote in the same way their pullbacks to~$\tX$.
Consider the following projective bundles
\begin{equation*}
\P_{X_1}(\cO_{X_1}(-H_1) \oplus \cC_{X_1}),
\qquad 
\P_{X_2}(\cU_{X_2} \oplus \cO_{X_2}(-H_2)),
\qquad\text{and}\qquad 
\P_{\tX}(\cO_\tX(-H_1) \oplus \cO_\tX(-H_2)),
\end{equation*}
over~$X_1$, $X_2$, and~$\tX$, respectively.
Note that the summands of rank~$1$ induce sections of the first two projective bundles,
and a pair of sections of the last one.
It is not hard to prove that the relative hyperplane class of each of these projective bundles 
is base point free and induces a $\bG$-equivariant morphism into~$\P(V_1 \oplus V_2)$.
The image of each morphism is the horospherical variety~$X \subset \P(V_1 \oplus V_2)$, 
the images of the sections are disjoint subvarieties
\begin{equation*}
X_1 = X \cap \P(V_1)
\qquad\text{and}\qquad 
X_2 = X \cap \P(V_2),
\end{equation*}
and in this way we obtain isomorphisms
\begin{align}
\label{eq:bl-x2-x}
\Bl_{X_2}(X) &\cong \P_{X_1}(\cO_{X_1}(-H_1) \oplus \cC_{X_1}),
\\
\label{eq:bl-x1-x}
\Bl_{X_1}(X) &\cong \P_{X_2}(\cU_{X_2} \oplus \cO_{X_2}(-H_2)),
\\
\label{eq:bl-x1-x2-x}
\Bl_{X_1 \sqcup X_2}(X) &\cong \P_{\tX}(\cO_\tX(-H_1) \oplus \cO_\tX(-H_2)).
\end{align} 
All these isomorphisms are $\bG$-equivariant.

The constructions described above are quite general and can be applied 
to horospherical varieties with Picard number~$1$ of other types 
(see~\cite[Theorem~0.1]{Pas} for classification).
The special property of the horospherical variety of type~$\rG_2$ 
is its relation to another smooth projective~$\bG$-variety ---
the {\sf orthogonal isotropic Grassmannian}
\begin{equation}
\label{eq:y}
Y = \OGr(2,V_1),
\end{equation}
the subvariety of~$\Gr(2,V_1)$ that parameterizes 2-dimensional subspaces 
isotropic with respect to the quadratic equation of~$X_1 \subset \P(V_1)$.

It can be observed that~$X$ and~$Y$ share all numerical invariants;
in particular, they have the same ranks of the Grothendieck groups (equal to~$12$),
the same Fano indices (equal to~$4$), the same dimensions of the spaces of global sections of~$\cO(1)$ (equal to~$21$), and so on.
A nice explanation to these coincidences was given in~\cite[Proposition~2.3]{PP}, 
where a \emph{smooth degeneration} of~$Y$ to~$X$,
i.e., a smooth projective variety over~$\AA^1$ with central fiber isomorphic to~$X$
and all other fibers isomorphic to~$Y$ has been constructed.

The goal of this paper is to give two geometric constructions of such a family over any smooth pointed curve~$(C,0)$.

\begin{theorem}
\label{thm:main}
Let~$(C,0)$ be a smooth pointed curve and set~$\cY \coloneqq Y \times C$. 
Then there is a commutative diagram
\begin{equation}
\label{diagram}
\vcenter{\xymatrix@C=0em{
&&&&
\Bl_{X_1}(\cX) \ar@{=}[rr] \ar[dl]_{\pi_\cX} &&
\Bl_{X_2}(\cY) \ar[dr]^{\pi_\cY}
\\
X_1 \ar@{^{(}->}[rrr] &&&
\cX \ar[drr]_{f_\cX} &&&&
\cY \ar[dll]^{f_\cY} &&&
X_2 \ar@{_{(}->}[lll] 
\\
&&&&& C
}}
\end{equation}
where
\begin{itemize}
\item 
$f_\cX \colon \cX \to C$ is a smooth projective morphism such that~$\cX_0 \coloneqq f_\cX^{-1}(0) \cong X$, 
\item 
$f_\cY \colon \cY \to C$ is the projection to the second factor so that~$\cY_0 \coloneqq f_\cY^{-1}(0) \cong Y$, and
\item 
$\pi_\cX$ and~$\pi_\cY$ are the blowups of the smooth subvarieties~$X_1 \subset X = \cX_0 \subset \cX$ 
and~$X_2 \subset Y = \cY_0 \subset \cY$.
\end{itemize}
In particular, $f_{\cX}^{-1}(C \setminus \{0\}) \cong f_{\cY}^{-1}(C \setminus \{0\}) \cong Y \times (C \setminus \{0\})$,
so that~$X$ is a smooth degeneration of~$Y$.
\end{theorem}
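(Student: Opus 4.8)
The plan is to produce the variety $\cX$ and the blowup $\Bl_{X_1}(\cX)$ simultaneously, by exhibiting a single smooth projective fourfold-family over $C$ that maps to both $\cX$ and $\cY$ in the required way. The key idea is that the right-hand square of~\eqref{diagram} is already understood over $C\setminus\{0\}$ (it is just a product of the classical blowup of $X_2$ in $Y=\OGr(2,V_1)$ by $C\setminus\{0\}$), while the degenerate fiber is governed by the projective-bundle descriptions~\eqref{eq:bl-x2-x}--\eqref{eq:bl-x1-x} of the blowups of $X$. So the strategy is: first construct $\pi_\cY\colon \cZ\to\cY$ as the blowup of $X_2\subset Y\times\{0\}\subset \cY$; then show that $\cZ$ carries a second contraction $\pi_\cX\colon \cZ\to\cX$ onto a smooth family $\cX\to C$ whose special fiber is $X$, such that $\pi_\cX$ is the blowup of $X_1\subset\cX_0$. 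The existence of the second contraction is where the geometry of $\rG_2$ — specifically the Cayley bundle $\cC_{X_1}$ and the flag variety $\tX=\P_{X_1}(\cC_{X_1})$ — enters, via Lemma~\ref{lem:chain} and the isomorphisms~\eqref{eq:bl-x2-x},~\eqref{eq:bl-x1-x},~\eqref{eq:bl-x1-x2-x}.

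Concretely, I would first identify the exceptional divisor $E\subset\cZ$ of $\pi_\cY$ with the projectivized normal bundle $\P_{X_2}(\cN_{X_2/Y})$, and compute this normal bundle using the chain of varieties in Lemma~\ref{lem:chain}: it should relate to $\cU_{X_2}$ and $\cO_{X_2}(-H_2)$ so that $E\cong \P_{X_2}(\cU_{X_2}\oplus\cO_{X_2}(-H_2))$, matching the right-hand side of~\eqref{eq:bl-x1-x}. Thus $E$, as an abstract variety, is $\Bl_{X_1}(X)$, and it carries a canonical contraction $E\to X$ collapsing the other projective subbundle. The central fiber $\cZ_0$ of $\cZ\to C$ is the union of (the strict transform of $Y$) and $E$ along a common divisor; one checks that the strict transform of $Y\times\{0\}$ is again $\Bl_{X_2}(Y)$ and that it meets $E$ along the exceptional divisor of that classical blowup. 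Now one produces $\pi_\cX$ by contracting inside $\cZ_0$ the strict transform of $Y$ down onto $X_1$ (this is the $\P^1$-fibration $p_1\colon\tX\to X_1$ of~\eqref{eq:tx-x1}, relativized), and simultaneously doing nothing away from the central fiber; the relevant contractibility is supplied by a line bundle on $\cZ$ that is $\pi_\cY$-ample twisted appropriately, and whose restriction to $\cZ_0$ is semiample with the desired Stein factorization. The resulting $\cX$ has $\cX_0\cong X$ by the projective-bundle presentation~\eqref{eq:bl-x2-x}, and $\pi_\cX$ is by construction the blowup of $X_1\subset\cX_0$ with exceptional divisor the strict transform of $Y$.

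Smoothness and flatness of $f_\cX\colon\cX\to C$ are then checked fiberwise: away from $0$ nothing has changed, and over $0$ one has $\cX_0\cong X$, which is smooth; that the total space $\cX$ is smooth follows because $\cZ$ is smooth (a blowup of a smooth subvariety of a smooth variety) and $\pi_\cX$ is a blowup of a smooth subvariety — alternatively, because the two central-fiber $\P^1$-fibrations on $\tX$ (the $p_1$ and $p_2$ of~\eqref{eq:tx} and~\eqref{eq:tx-x1}) are the two rulings, so the local model of $\cZ$ near the central fiber is a standard one-parameter smoothing of a simple normal crossing configuration. The final sentence of the theorem is then immediate: restricting the diagram over $C\setminus\{0\}$, both $\pi_\cX$ and $\pi_\cY$ become isomorphisms (their exceptional loci sit over $0$), so $f_\cX^{-1}(C\setminus\{0\})\cong f_\cY^{-1}(C\setminus\{0\})=Y\times(C\setminus\{0\})$, and combined with $\cX_0\cong X$ this exhibits $X$ as a smooth degeneration of $Y$.

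The main obstacle, I expect, is the middle step: producing the second contraction $\pi_\cX$ and verifying that it really is the blowup of a \emph{smooth} subvariety $X_1$ rather than some singular center, i.e., checking that the line bundle effecting the contraction is genuinely base-point free on the total space $\cZ$ (not just on the central fiber) and that its Stein factorization is a blowup in the strong sense. This is exactly the point where one must use the explicit $\rG_2$-geometry — the identifications~\eqref{eq:bl-x2-x}--\eqref{eq:bl-x1-x2-x} and the structure of $\tX$ as a two-ruled variety — rather than general deformation theory; the alternative "second construction" promised in the abstract presumably bypasses part of this by building $\cX$ directly from one of the projective bundles over $X_1$ and degenerating the rank-one summand $\cO_{X_1}(-H_1)$.
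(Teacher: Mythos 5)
Your overall strategy is the same as the paper's (blow up $X_2\subset Y\times\{0\}\subset\cY$, identify the two components of the central fiber, then contract the strict transform of $Y$ onto $X_1$ to produce $\cX$), and your description of the exceptional divisor $E\cong\P_{X_2}(\cU_{X_2}\oplus\cO_{X_2}(-H_2))\cong\Bl_{X_1}(X)$ is correct. But the step you yourself flag as the "main obstacle" is a genuine gap, and it is exactly the content the paper supplies. What makes the second contraction exist, and what makes its center smooth, is the key fact that the strict transform $\Bl_{X_2}(Y)$ is itself a projective bundle over $X_1$: Proposition~\ref{prop:bl-x2-y} gives $\Bl_{X_2}(Y)\cong\P_{X_1}(\cW_{X_1})$ with $\cW_{X_1}$ of rank $3$, proved via the flag variety $\OFl(2,3;V_1)$ and the hyperplane section $X_1\subset\OGr(3,V_1)$. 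You never state or prove this; the isomorphisms \eqref{eq:bl-x2-x}--\eqref{eq:bl-x1-x2-x} you invoke concern blowups of $X$, not of $Y$, and they do not yield it. Indeed your description of the contraction of the strict transform of $Y$ onto $X_1$ as "the $\P^1$-fibration $p_1\colon\tX\to X_1$ relativized" is wrong on dimensional grounds: the strict transform is $7$-dimensional and $X_1$ is $5$-dimensional, so the contraction is a $\P^2$-bundle (the one of Proposition~\ref{prop:bl-x2-y}); $p_1$ is only its restriction to the intersection $\tX=E_1\cap E_2$. Without this $\P^2$-bundle structure, your appeal to "a line bundle on $\cZ$ that is $\pi_\cY$-ample twisted appropriately, \dots semiample with the desired Stein factorization" is a placeholder rather than an argument: base-point freeness on the total space, the identification of the image of the central fiber, and the fact that the Stein factorization is a blowup of a smooth $X_1$ are precisely what must be proved. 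Your smoothness argument is also circular — you deduce smoothness of $\cX$ from $\pi_\cX$ being a blowup of a smooth subvariety of a smooth $\cX$, which is what you are trying to establish.

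The paper avoids all of this by making the second contraction completely explicit: it performs an elementary transformation of ambient projective bundles over $C$ associated to the exact sequence \eqref{eq:v1v2}, i.e.\ $\Bl_{\P(V_2)}(\P_C(\wedge^2V_1\otimes\cO_C))\cong\Bl_{\P(V_1^\vee)}(\P_C(\cV))\to\P_C(\cV)$ (Lemma~\ref{lem:et}), and uses the scheme-theoretic equality $X_2=Y\cap\P(V_2)$ (Corollary~\ref{cor:y-cap-pv2}) to see that the strict transform of $\cY$ under the first blowup is $\Bl_{X_2}(\cY)$; the map $\pi_\cX$ is then just the restriction of the ambient contraction, the image of $E_1$ is $X$ (so $f_\cX$ is smooth because its scheme central fiber is the smooth $X$), the image of the strict transform is $X_1$ by Proposition~\ref{prop:bl-x2-y}, and the blowup statement is concluded by the criterion of \cite[Lemma~2.5]{K18}. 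To repair your write-up you would need, at minimum, to prove the analogue of Proposition~\ref{prop:bl-x2-y} (or import it), replace the vague semiampleness claim by an actual construction of the contraction (the elementary-transformation trick, or a verified base-point-free linear system on $\cZ$), and invoke a blowup criterion rather than asserting that the Stein factorization is a blowup with smooth center.
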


\begin{theorem}
\label{thm:main-2}
There is a vector bundle~$\tcW$ of rank~$3$ on~$X_1 \times C$ and a commutative diagram
\begin{equation}
\label{diagram-2}
\vcenter{\xymatrix{
& \tX \times C \ar@{^{(}->}[d] \ar@/_2em/[ddl]_{p_1} \ar@/^2em/[ddr]^{p_2}
\\
& \P_{X_1 \times C}(\tcW) \ar[dl] \ar[d]^\uprho 
\\
X_1 \times C &
\cX &
X_2 \times C \ar@{_{(}->}[l]
}}
\end{equation}
over~$C$, where~$\cX \to C$ is the same as in Theorem~\textup{\ref{thm:main}}
and~$\uprho$ is the blowup of~$X_2 \times C \subset \cX$.
\end{theorem}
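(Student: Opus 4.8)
I would obtain $\tcW$ and diagram~\eqref{diagram-2} by blowing up $X_2\times C$ inside the variety $\cX$ of Theorem~\ref{thm:main}. Write $\uprho\colon\cX'\coloneqq\Bl_{X_2\times C}(\cX)\to\cX$ for this blowup, $E\subset\cX'$ for its exceptional divisor, and $f\coloneqq f_\cX\circ\uprho\colon\cX'\to C$. Since $X_2\times C$ is smooth and flat over $C$, the variety $\cX'$ and the morphism $f$ are smooth, blowing up commutes with restriction to the fibres of $f$ (so $\cX'_0\cong\Bl_{X_2}(X)$, and $\cX'_t\cong\Bl_{X_2}(Y)$ for $t\neq 0$ with $X_2\hookrightarrow Y$ the closed $\bG$-orbit), $E$ restricts fibrewise to the exceptional divisor, and $N_{X_2\times C/\cX}$ restricts to $N_{X_2/X}$, resp.\ $N_{X_2/Y}$. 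Fix a line bundle $\cL$ on $\cX$ restricting to the ample generator $\cO(1)$ of the Picard group on each fibre of $f_\cX$ (it exists: the relative Picard functor of $f_\cX$ is the constant group scheme $\ZZ$ and $\Br(C)=0$). Using the decomposition of $\bG$-modules $\Lambda^2 V_1\cong V_1\oplus V_2$ in which $V_2\cong\mathrm{Lie}(\bG)$ is the kernel of the cross product $\Lambda^2 V_1\twoheadrightarrow V_1$, every fibre of $f_\cX$ is embedded in $\P(V_1\oplus V_2)$ ($X$ as in the Introduction, $Y=\OGr(2,V_1)$ by Plücker) in such a way that $X_2$ is exactly its intersection with $\P(V_2)$, cut out scheme-theoretically by the $7$-dimensional space $V_1^\vee$ of linear forms vanishing on $\P(V_2)$; for $X$ this is built into the constructions of~\cite{Pas,GPPS}, and for $Y$ it is the statement that $\Gr(2,V_1)\cap\P(V_2)=X_2$ with reduced scheme structure and that $H^0(Y,\cO(1))\to H^0(X_2,\cO(1))$ is surjective (compare Lemma~\ref{lem:chain}). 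It follows that $\cL\otimes\cI_{X_2\times C}$ is globally generated over $C$ by $V_1^\vee$, hence so is its total transform $\cN\coloneqq\uprho^*\cL\otimes\cO_{\cX'}(-E)$, with $f_*\cN\cong V_1^\vee\otimes\cO_C$; the resulting $C$-morphism $\cX'\to\P(V_1)\times C$ factors through $X_1\times C$, because on each fibre it is the projection from $\P(V_2)$, i.e.\ the cross-product map $U\mapsto[u_1\wedge u_2]$, whose image is isotropic and whose base locus is precisely $X_2$. Call this morphism $\bar g=(g,f)\colon\cX'\to X_1\times C$. Over the central fibre, \eqref{eq:bl-x2-x} identifies $\cX'_0=\Bl_{X_2}(X)$ with $\P_{X_1}(\cO_{X_1}(-H_1)\oplus\cC_{X_1})$ and its projection $\pi_0$ to $X_1$, so that $\uprho^*\cL|_{\cX'_0}$ is the relative $\cO(1)$; since the class of $E|_{\cX'_0}=\P_{X_1}(\cC_{X_1})$ is then $[\uprho^*\cL|_{\cX'_0}]-\pi_0^*H_1$, we get $\cN|_{\cX'_0}\cong\pi_0^*\cO_{X_1}(H_1)$ and $\bar g|_{\cX'_0}=(\pi_0,0)$.

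The next step is to show $\bar g$ is a Zariski-locally trivial $\P^2$-bundle. It is proper, and smooth along $\cX'_0$ (there $d\bar g$ is surjective, since $\bar g|_{\cX'_0}$ is a $\P^2$-bundle projection and $f$ is smooth). As the non-smooth locus of $\bar g$ is closed, its image in $X_1\times C$ is closed and disjoint from $X_1\times\{0\}$; but over $C\setminus\{0\}$ both $\cX'$ and $\bar g$ are products with $C\setminus\{0\}$, so this image has the form $A\times(C\setminus\{0\})$, which is closed in $X_1\times C$ only if $A=\varnothing$. Hence $\bar g$ is smooth; its fibres are smooth projective surfaces, those over $X_1\times\{0\}$ being $\P^2$, and since $\P^2$ is rigid the locus of $(p,t)$ with $\bar g^{-1}(p,t)\cong\P^2$ is open and contains $X_1\times\{0\}$, so for each $p\in X_1$ it contains $(p,t)$ for all $t$ near $0$ and hence (product structure again) for all $t$. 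Thus every fibre of $\bar g$ is $\P^2$, and $\uprho^*\cL$, which restricts to $\cO_{\P^2}(1)$ on the fibres over $X_1\times\{0\}$, has fibrewise degree $1$ everywhere, so it restricts to $\cO_{\P^2}(1)$ on every fibre of $\bar g$. Therefore $\bar g=\P_{X_1\times C}(\tcW)$ with $\tcW\coloneqq\bar g_*(\uprho^*\cL)$ a rank-$3$ bundle on $X_1\times C$ and $\uprho^*\cL$ the relative $\cO(1)$; by the central-fibre computation, $\tcW|_{X_1\times\{0\}}\cong\cO_{X_1}(-H_1)\oplus\cC_{X_1}$ up to a twist by a line bundle pulled back from $C$.

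It remains to identify the exceptional divisor $E\subset\P_{X_1\times C}(\tcW)$ with $\tX\times C$. Its class is $[\uprho^*\cL]-g^*\cO_{X_1}(H_1)$, so $E$ is the projectivisation of a rank-$2$ subbundle $\cK\subset\tcW$ with $\cK|_{X_1\times\{0\}}\cong\cC_{X_1}$; restricted to a fibre $\cX'_t$, $E$ is the exceptional divisor $\P_{X_2}(N_{X_2/\cX_t})$ of $\Bl_{X_2}(\cX_t)$, a $\bG$-homogeneous $\P^1$-bundle over the rational homogeneous variety $X_2$, hence isomorphic — compatibly with the projection to $X_2$ — to $\P_{X_2}(\cU_{X_2})=\tX$ of~\eqref{eq:tx}, while on the other hand $E|_{\cX'_t}=\P_{X_1}(\cK_t)$, which a determinant count together with $\cK|_{X_1\times\{0\}}\cong\cC_{X_1}$ forces to be $\P_{X_1}(\cC_{X_1})=\tX$, compatibly with the projection to $X_1$, by~\eqref{eq:tx-x1}. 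A standard rigidity argument (all fibres of $E\to C$ being $\cong\tX$, which is rigid, and the whole picture being $\bG$-equivariant) then upgrades these fibrewise isomorphisms to an isomorphism $E\cong\tX\times C$ over $C$ under which the bundle projection $\P_{X_1\times C}(\tcW)\to X_1\times C$ restricts on $E$ to $p_1\times\id_C$ and $\uprho$ restricts on $E$ to $p_2\times\id_C\colon\tX\times C\to X_2\times C$ (the latter because $\uprho|_E$ is fibrewise the blowup contraction $\P_{X_2}(N_{X_2/\cX_t})\to X_2$, i.e.\ $p_2$, the identification with $\tX$ for $t=0$ being exactly the one in the Introduction). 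This is diagram~\eqref{diagram-2}. \emph{The main obstacle} is the geometric input feeding into $\bar g$ in the first paragraph — that $X_2$ is cut out scheme-theoretically by linear forms inside $Y=\OGr(2,V_1)$, equivalently that $\Gr(2,V_1)\cap\P(V_2)=X_2$ is reduced and hyperplanes restrict surjectively to it — since this is exactly what makes $\cN$ globally generated and $\bar g$ a genuine morphism resolving the cross-product rational map $\OGr(2,V_1)\dashrightarrow X_1$; granting it, together with the standard homogeneous-bundle facts about $N_{X_2/X}$, $N_{X_2/Y}$ and $\cK_t$, everything else is formal.
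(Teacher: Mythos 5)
Your strategy runs in the opposite direction to the paper's. The paper first constructs $\tcW$ explicitly, as the extension \eqref{eq:cw} with class $\upepsilon\otimes s_0$ degenerating the non-split bundle $\cW_{X_1}$ of \eqref{eq:cayley} to $\cO_{X_1}(-H_1)\oplus\cC_{X_1}$, then maps $\P_{X_1\times C}(\tcW)$ by its relative hyperplane class and identifies the image fibres via \eqref{eq:bl-x2-x} and Proposition~\ref{prop:bl-x2-y}, the blown-down divisor $\P_{X_1\times C}(\cC_{X_1}\boxtimes\cL)\cong\tX\times C$ coming for free. You instead start from the $\cX$ of Theorem~\ref{thm:main}, blow up $X_2\times C$, resolve the relative linear projection from the $\P(V_2)$-direction, and prove the result is a $\P^2$-bundle by smoothness along the central fibre, the product structure over $C\setminus\{0\}$, and rigidity of $\P^2$, recovering $\tcW$ only a posteriori as $\bar g_*\uprho^*\cL$; this is a viable alternative (it would even reprove Proposition~\ref{prop:bl-x2-y} for the fibres over $t\ne 0$), at the cost of losing the explicit extension description of $\tcW$.

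There is, however, a genuine gap at the very first step: you blow up ``$X_2\times C\subset\cX$'', but Theorem~\ref{thm:main} does not supply such a subvariety --- its existence, flat over $C$ and restricting to the standard $X_2\subset X$ over $0$ and to $X_2\subset Y$ elsewhere, is precisely part of what Theorem~\ref{thm:main-2} asserts and must be constructed. It is not enough to take the closure of $X_2\times(C\setminus\{0\})$ without identifying its central fibre (a priori it could be a degenerate or non-reduced limit of $X_2$); the cleanest construction is to take the image under $\pi_\cX$ of the strict transform of $X_2\times C\subset\cY$, observing that this strict transform meets the exceptional divisor $\P_{X_2}(\cU_{X_2}\oplus\cO_{X_2}(-H_2))$ of Lemma~\ref{lem:divisors} along the section corresponding to the summand $\cO_{X_2}(-H_2)$, whose image in the central fibre is exactly $X_2=X\cap\P(V_2)$. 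Two further points are asserted rather than proved: (i) the relative global generation of $\cL\otimes\cI_{X_2\times C}$ by a \emph{single} trivial subsheaf $V_1^\vee\otimes\cO_C\subset f_{\cX*}\cL$ --- you verify the fibrewise statements (Corollary~\ref{cor:y-cap-pv2} for $t\ne0$, a citation for $X$), but you must also exhibit the constant subsheaf over $C$; it does exist inside $\cV^\vee$ from the proof of Theorem~\ref{thm:main}, as the constant subbundle generated by the image of the dual of $\lambda$ in \eqref{eq:v1v2}, and this should be said; (ii) the closing ``standard rigidity argument'' identifying $E$ with $\tX\times C$ compatibly with \emph{both} projections $p_1$ and $p_2$ is only sketched --- it is cleaner to use $E=\P_{X_2\times C}(\cN_{X_2\times C/\cX})$ and identify the relative normal bundle (or the $\bG$-equivariant embedding of $E$ into $X_1\times X_2\times C$ with fibrewise image the flag variety $\tX$) than to appeal to rigidity of $\tX$. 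Granting these repairs, the rest of your argument (smoothness of $\bar g$, spreading out from the central fibre, $\tcW\coloneqq\bar g_*\uprho^*\cL$) is sound.
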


The crucial observation (Proposition~\ref{prop:bl-x2-y}) on which the proof of both theorems relies 
is that there is a natural embedding~$X_2 \hookrightarrow Y$ 
and that the blowup~$\Bl_{X_2}(Y)$ has a structure of a projective bundle over~$X_1$,
analogous to~\eqref{eq:bl-x2-x}.

\section{The key observation}

Recall that~$V_1$ denotes the fundamental $7$-dimensional representation of the group~$\bG$ and~$Y$ is defined in~\eqref{eq:y}.
We denote by~$\cU \subset V_1 \otimes \cO$ and~$\cU^\perp \subset V_1^\vee \otimes \cO$ 
the tautological subbundles of rank~$2$ and~$5$ on~$\Gr(2,V_1)$, respectively,
and write~$\cO(1)$ for the Pl\"ucker line bundle.
We also denote by~$\cS$ the spinor bundle on~$Y$ 
with the convention opposite to that of~\cite[\S6]{K08}, so that~$\wedge^2\cS^\vee \cong \cO(1)$.

\begin{lemma}
\label{lem:chain}
There exists a chain of embeddings
\begin{equation*}
X_2 \hookrightarrow Y \hookrightarrow \Gr(2,V_1) \subset \P(\wedge^2V_1),
\end{equation*}
such that
\begin{itemize}
\item 
$Y \subset \Gr(2,V_1)$ is the zero locus of a regular section of~$\Sym^2\cU^\vee$, 
\item 
$X_2 \subset \Gr(2,V_1)$ is the zero locus of a regular section of~$\cU^\perp(1)$, and
\item 
$X_2 \subset Y$ is the zero locus of a regular section of~$\cS^\vee$. 
\end{itemize}
Moreover, the restrictions to~$X_2$ of~$\cU$ and~$\cS$ are isomorphic.
\end{lemma}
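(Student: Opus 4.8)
The plan is to build the three embeddings and the section identifications more or less simultaneously, using the representation theory of $\bG$ to pin down the relevant bundles and sections, and then to verify regularity (equivalently, that the zero loci have the expected codimension) by a dimension count.

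First I would set up $Y = \OGr(2,V_1)$ inside $\Gr(2,V_1)$. Fixing a $\bG$-invariant nondegenerate quadratic form $q$ on $V_1$ (its existence is exactly the statement that $X_1\subset\P(V_1)$ is the quadric), the condition that a $2$-plane be isotropic is the vanishing of the induced section of $\Sym^2\cU^\vee$; since $\OGr(2,7)$ has codimension $3$ in $\Gr(2,7)$, which equals $\operatorname{rk}\Sym^2\cU^\vee$, this section is regular. This gives the second embedding in the chain and the first bullet.

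Next, for $X_2\hookrightarrow\Gr(2,V_1)$: the adjoint variety $X_2\subset\P(V_2)$, $V_2=\mathfrak{g}$, consists of the highest-weight lines, i.e.\ (projectivized) long-root spaces. I would use the standard fact (this is presumably what ``see Lemma~\ref{lem:chain}'' in the introduction is pointing to) that for type $\rG_2$ the bracket gives a $\bG$-equivariant map $\wedge^2 V_1 \to V_2$ that is surjective with kernel $V_1$ (as $\wedge^2V_1$ has dimension $21 = 7+14$), whence an equivariant map $\Gr(2,V_1)\dashrightarrow\P(V_2)$ defined away from the locus where $\wedge^2\cU$ lands in $V_1\subset\wedge^2V_1$; tracing through, $X_2\subset\Gr(2,V_1)$ is the locus of $2$-planes spanned by $x,y$ with $[x,y]$ a long-root vector and $x,y$ in the corresponding root $\fsl_2$. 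Equivalently, $X_2$ is cut out by the composite section $\wedge^2\cU \hookrightarrow V_1\otimes\cO \to V_1^\vee\otimes\cO/\cU^\perp$ of the quotient, i.e.\ by a section of $\cU^\perp(1)$ after dualizing (using $\wedge^2\cU=\cO(-1)$); since $X_2$ has dimension $5$ and $\Gr(2,7)$ has dimension $10$, and $\operatorname{rk}\cU^\perp(1)=5$, regularity again follows from the codimension count. This is the second bullet.

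For the third bullet and the final sentence I would restrict to $Y$. On $Y=\OGr(2,V_1)$ the quadratic form identifies $V_1^\vee\cong V_1$ and the tautological filtration becomes $\cU\subset\cU^\perp\subset V_1\otimes\cO$ with $\cU^\perp/\cU$ carrying a nondegenerate quadratic form and $\cU=\cU^{\perp\perp}$; the spinor bundle $\cS$ (rank $2$, with $\wedge^2\cS^\vee\cong\cO(1)$ per the stated convention) sits in the standard exact sequence relating it to $\cU^\perp/\cU$ and to $\cU$ — concretely, for the $\mathrm{D}_?$/$\mathrm{B}_?$ spinor geometry of a rank-$3$ quadratic bundle one has a canonical isomorphism $\cU|_Y \cong \cS$ or a short exact sequence from which it follows; I would cite the description of spinor bundles on $\OGr(2,2n{+}1)$ (e.g.\ Kuznetsov's \cite{K08}, whose convention we have explicitly flipped). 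Granting $\cU|_Y\cong\cS$, the section of $\cU^\perp(1)$ cutting out $X_2$ in $\Gr(2,V_1)$ restricts on $Y$ to a section whose vanishing, combined with the already-imposed isotropy, is equivalent to the vanishing of a section of $\cU^\vee|_Y(1)\cong\cU^\vee|_Y\otimes\wedge^2\cS^\vee$; using $\cU|_Y\cong\cS$ and $\wedge^2\cS\cong\cO(-1)$ this bundle is $\cS^\vee$. Counting: $\dim Y = 7$, $\dim X_2 = 5$, $\operatorname{rk}\cS^\vee = 2$, so the section is regular. The one genuine obstacle is the identification $\cU|_Y\cong\cS$ together with the bookkeeping ``$\cU^\perp(1)|_Y$ section $\leadsto$ $\cS^\vee$ section'': everything else is a dimension count, but this step requires correctly matching the two possible spinor bundles (the convention twist by $\wedge^2$) and checking the map of sections is an isomorphism on the nose rather than merely up to a line bundle twist — I would do this by comparing $\bG$-equivariant structures, since all bundles and sections in sight are $\bG$-equivariant and $\bG$ acts with an open orbit, so an equivariant map of equivariant bundles that is nonzero is forced.
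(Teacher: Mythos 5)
There is a genuine gap, and it sits exactly where the real content of the lemma lies. Your treatment of the third bullet rests on the claim that $\cU|_Y\cong\cS$ as bundles on all of $Y$, and this is false: $H^0(Y,\cU^\vee)=V_1$ is $7$-dimensional while $H^0(Y,\cS^\vee)=\SS$ is the $8$-dimensional spinor representation (the paper uses this in the proof of Corollary~\ref{cor:y-cap-pv2}), so the two rank-$2$ bundles cannot be isomorphic; relatedly, $c_1(\cU^\vee(1))=3H$ while $c_1(\cS^\vee)=H$, so your ``bookkeeping'' step $\cU^\vee|_Y(1)\cong\cS^\vee$ cannot be repaired by any twist, and the equivariance/open-orbit argument does not help (a nonzero equivariant map of equivariant bundles need not be an isomorphism, and here there is no map of the required type at all). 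Note also that if $\cU|_Y\cong\cS$ were true, the final clause of the lemma would be vacuous --- the whole point is that the isomorphism $\cU|_{X_2}\cong\cS|_{X_2}$ holds only after restriction. The paper does not attempt to rederive this: both the $\cS^\vee$-section description of $X_2\subset Y$ and the isomorphism $\cU|_{X_2}\cong\cS|_{X_2}$ are quoted from \cite[Lemma~8.3]{K06}, and your proposal does not supply a valid substitute for that input.

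A secondary problem is your construction of the section of $\cU^\perp(1)$ cutting out $X_2$ in $\Gr(2,V_1)$. The composite you describe, $\wedge^2\cU\hookrightarrow\wedge^2V_1\otimes\cO\xrightarrow{\ \lambda\ }V_1^\vee\otimes\cO\to(V_1^\vee\otimes\cO)/\cU^\perp$, is identically zero, because $\lambda(u_1,u_2,-)$ automatically annihilates $u_1$ and $u_2$; and in any case it would be a section of the rank-$2$ bundle $\cU^\vee(1)$, which cannot cut out the codimension-$5$ variety $X_2$ regularly, nor does ``dualizing'' convert it into $\cU^\perp(1)$. The correct statement (Mukai's, which the paper simply cites) is that the contraction map $\wedge^2\cU\to V_1^\vee\otimes\cO$ \emph{factors through} $\cU^\perp$, giving a section of $\cU^\perp\otimes\wedge^2\cU^\vee=\cU^\perp(1)$ whose zero locus is the locus of $\lambda$-isotropic planes; identifying that locus with the adjoint variety is the nontrivial input, and only after that does your codimension count yield regularity. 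Finally, the paper gets the chain $X_2\subset Y\subset\Gr(2,V_1)$ (in particular that $\lambda$-isotropic planes are $q$-isotropic) from the observation that a highest weight vector of $V_2$ for $\bG$ is a highest weight vector of $\wedge^2V_1$ for $\SO(V_1)$ and $\GL(V_1)$, so the three varieties are the closed orbits of the nested groups; your root-space description gestures at this but is not carried out.
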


\begin{proof}
The description of~$X_2$ as the zero locus in~$\Gr(2,V_1)$ was discovered by Mukai 
(the global section of~$\cU^\perp(1)$ is given by a 3-form~$\lambda \in \wedge^3V_1^\vee$ 
whose stabilizer is the group~$\bG$).
The restriction of~$\cO(1)$ to~$X_2$ generates~$\Pic(X_2)$,
the linear span of~$X_2$ in~$\P(\wedge^2V_1)$ is~$\P(V_2)$,
and the corresponding embedding~$V_2 \hookrightarrow \wedge^2V_1$ extends to an exact sequence
\begin{equation}
\label{eq:v1v2}
0 \xrightarrow\quad V_2 \xrightarrow\quad \wedge^2V_1 \xrightarrow{\ \lambda\ } V_1^\vee \xrightarrow\quad 0
\end{equation}
of representations of~$\bG$.
Note that the~$\bG$-action on~$V_1$ preserves a nondegenerate quadratic form 
(the equation of the quadric~$X_1 \subset \P(V_1)$), 
hence we have a chain of group embeddings
\begin{equation*}
\bG \subset \SO(V_1) \subset \GL(V_1).
\end{equation*}
Moreover, a highest weight vector of~$V_2$ with respect to~$\bG$ 
is also a highest weight vector of~$\wedge^2V_1$ with respect to~$\SO(V_1)$ and~$\GL(V_1)$, 
and therefore we have a chain of the highest weight vector orbits
\begin{equation*}
X_2 \subset Y \subset \Gr(2,V_1)
\end{equation*}
of respective groups.

The description of~$Y \subset \Gr(2,V_1)$ as the zero locus is tautological 
(the section corresponds to the quadratic form on~$V_1$ preserved by~$\SO(V_1)$),
and the description of~$X_2 \subset Y$ as the zero locus of a section of~$\cS^\vee$
and an isomorphism~$\cS\vert_{X_2} \cong \cU\vert_{X_2}$
are established in~\cite[Lemma~8.3]{K06}.
\end{proof}

\begin{corollary}
\label{cor:y-cap-pv2}
There is an equality~$X_2 = Y \cap \P(V_2)$ of schemes.
\end{corollary}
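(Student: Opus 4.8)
The plan is to deduce the scheme-theoretic equality $X_2 = Y \cap \P(V_2)$ from the three zero-locus descriptions in Lemma~\ref{lem:chain}, combined with the exact sequence~\eqref{eq:v1v2}. The key point is that~$\P(V_2)$ is cut out inside~$\P(\wedge^2 V_1)$ by the linear forms vanishing on~$V_2$, which by~\eqref{eq:v1v2} are exactly the pullbacks of the linear functionals on~$V_1^\vee$ under the map~$\lambda\colon \wedge^2 V_1 \to V_1^\vee$; in other words, $\P(V_2) \subset \P(\wedge^2 V_1)$ is the vanishing of the composite section of the trivial bundle~$V_1^\vee \otimes \cO$ induced by~$\lambda$.

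First I would restrict everything to the Grassmannian~$\Gr(2,V_1)$. On~$\Gr(2,V_1)$ the map~$\lambda$ induces a morphism of bundles~$\wedge^2 V_1 \otimes \cO \to V_1^\vee \otimes \cO$ whose restriction along the tautological inclusion~$\cO(-1) \hookrightarrow \wedge^2 V_1 \otimes \cO$ (the Plücker embedding) is a section of~$V_1^\vee(1)$; composing with the projection~$V_1^\vee \otimes \cO \twoheadrightarrow (V_1^\vee \otimes \cO)/\cU^\perp$... more precisely, one checks that this section factors through the subbundle~$\cU^\perp(1) \subset V_1^\vee(1)$, because a decomposable bivector~$v_1 \wedge v_2$ is contracted by~$\lambda$ into the plane spanned by~$v_1, v_2$ — wait, rather its contraction lies in the annihilator; in any case, by Lemma~\ref{lem:chain} the resulting section of~$\cU^\perp(1)$ is precisely the regular section whose zero locus is~$X_2$. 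Hence $X_2 = \Gr(2,V_1) \cap \P(V_2)$ scheme-theoretically inside~$\P(\wedge^2 V_1)$. Since~$Y \subset \Gr(2,V_1)$, intersecting further gives $X_2 = Y \cap \P(V_2)$ as a \emph{set}, and the scheme structure on~$Y \cap \P(V_2)$ is the zero locus on~$Y$ of the restriction of the same section, which is a section of~$\cU^\perp(1)\vert_Y$. It remains to identify this with the section of~$\cS^\vee$ from Lemma~\ref{lem:chain}, or at least to show the two define the same subscheme.

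For that last identification I would use the isomorphism~$\cS\vert_{X_2} \cong \cU\vert_{X_2}$ together with a rank and regularity count: on~$Y$ the bundle~$\cS^\vee$ has rank~$2$ and~$X_2 \subset Y$ has codimension~$2$ (as $\dim X_2 = 5$, $\dim Y = 7$), so the section of~$\cS^\vee$ is regular; meanwhile the section of~$\cU^\perp(1)\vert_Y$ has a $5$-dimensional bundle but its zero locus is still~$X_2$ set-theoretically of codimension~$2$, so it is far from regular as a section of~$\cU^\perp(1)$ — this forces it to factor through a rank-$2$ subbundle (or quotient) on which it becomes regular, and comparing with the spinor description on~$Y$ (where~$\cU^\perp\vert_Y$ and~$\cS$ are linked by the spinor-bundle exact sequences on the quadric bundle / via~$\wedge^2\cS^\vee \cong \cO(1)$) identifies that rank-$2$ piece with~$\cS^\vee$. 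The honest way, which I would adopt, is simply to invoke~\cite[Lemma~8.3]{K06} once more: the section of~$\cS^\vee$ cutting out~$X_2$ in~$Y$ is \emph{by construction} obtained by restricting the~$3$-form~$\lambda$, i.e. it is the image of the section of~$\cU^\perp(1)\vert_Y$ under a canonical surjection~$\cU^\perp(1)\vert_Y \twoheadrightarrow \cS^\vee$ — so the two zero loci coincide with their natural scheme structures, and this common scheme is~$Y \cap \P(V_2)$.

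The main obstacle is the scheme-theoretic (as opposed to set-theoretic) part: one must be careful that the linear span condition "$X_2$ spans~$\P(V_2)$" together with the zero-locus descriptions genuinely pins down the intersection multiplicity, rather than merely the support. I expect the cleanest argument is the regularity/codimension bookkeeping above: $Y \cap \P(V_2)$ is the zero scheme of a section of a bundle of rank~$2$ (after the reduction to~$\cS^\vee$), it has the expected codimension~$2$, hence is Cohen–Macaulay and reduced wherever~$X_2$ is, and since~$X_2$ is smooth the two schemes agree on the nose.
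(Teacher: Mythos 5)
Your first half is correct and proceeds by a genuinely different route from the paper, proving in fact the stronger statement that $X_2=\Gr(2,V_1)\cap\P(V_2)$ as schemes: the section of $\cU^\perp(1)$ from Lemma~\ref{lem:chain} is the contraction with $\lambda$, it factors through the subbundle $\cU^\perp(1)\subset V_1^\vee(1)$ (so the two sections generate the same ideal and have the same zero scheme), and the components of the $V_1^\vee(1)$-section are exactly the restrictions of the linear forms cutting out $\P(V_2)=\P(\ker\lambda)$, by~\eqref{eq:v1v2}. The paper instead argues on $Y$: it twists the Koszul resolution of $X_2\subset Y$ into $0\to\cO_Y\to\cS^\vee\to\cI_{X_2}(1)\to 0$, uses global generation of $\cS^\vee$ together with the $\bG$-decomposition $\SS=V_1\oplus\kk$ of $H^0(Y,\cS^\vee)$ to conclude that $\cI_{X_2}(1)$ is generated by the image of $V_1\subset\wedge^2V_1^\vee$, and identifies this image by equivariance with the annihilator of $V_2$. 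Your version trades that representation-theoretic input for the Mukai description on the ambient Grassmannian; both work.

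Where you go astray is in the second half, and the ``main obstacle'' you name is not actually there. Once $X_2=\Gr(2,V_1)\cap\P(V_2)$ is known scheme-theoretically, intersecting with $Y$ loses nothing: since $\cI_{\Gr(2,V_1)}\subseteq\cI_Y$ and $\cI_Y\subseteq\cI_{X_2}$ (the latter because $X_2\subseteq Y$ by Lemma~\ref{lem:chain}), one has $\cI_Y+\cI_{\P(V_2)}=\cI_Y+\cI_{\Gr(2,V_1)}+\cI_{\P(V_2)}=\cI_Y+\cI_{X_2}=\cI_{X_2}$, which is precisely the scheme-theoretic equality $Y\cap\P(V_2)=X_2$; no comparison of the $\cU^\perp(1)$- and $\cS^\vee$-sections is needed. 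The patch you offer in its place is not sound as written: a section whose vanishing has smaller than expected codimension is not thereby forced to factor through a rank-$2$ subbundle; the asserted canonical surjection $\cU^\perp(1)\vert_Y\twoheadrightarrow\cS^\vee$ carrying one section to the other is not established (it is not part of Lemma~\ref{lem:chain}, nor of \cite[Lemma~8.3]{K06} as quoted); and even granting it, the zero scheme of a section is only \emph{contained} in the zero scheme of its image under a surjection of bundles, so the claimed coincidence of scheme structures would still require the ideal argument above. The Cohen--Macaulay/reducedness bookkeeping likewise presupposes the unproved reduction to a rank-$2$ bundle. Delete that detour and replace it with the two-line ideal computation, and your proof is complete.
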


The intersection in the right-hand side is, of course, highly non-transverse.

\begin{proof}
Let~$\cI_{X_2}$ be the ideal of~$X_2 \subset Y = \OGr(2,V_1)$.
By Lemma~\ref{lem:chain} we have an exact sequence
\begin{equation*}
0 \to \cO_Y \to \cS^\vee \to \cI_{X_2}(1) \to 0.
\end{equation*}
The space of global sections of~$\cS^\vee$ on~$Y$ is the spinor $8$-dimensional representation~$\SS$ of~$\Spin(V_1)$;
when restricted to~$\bG$ it is isomorphic to the direct sum~$V_1 \oplus \kk$.
Since, moreover, $\cS^\vee$ is globally generated, the above exact sequence gives an epimorphism~$V_1 \otimes \cO_Y \twoheadrightarrow \cI_{X_2}(1)$.
This means that~$X_2 \subset \Gr(2,V_1)$ is scheme-theoretically cut out by the hyperplanes
corresponding to the subspace
\begin{equation*}
V_1 \subset H^0(Y, \cO_Y(1)) = \wedge^2V_1^\vee.
\end{equation*}
This embedding is obviously $\bG$-equivariant, hence is given by the dual of the second map in~\eqref{eq:v1v2}, 
hence~$X_2 = Y \cap \P(V_2)$.
\end{proof}

The following proposition is an analogue of~\eqref{eq:bl-x2-x}; 
it is the key to the proof of Theorems~\ref{thm:main} and~\ref{thm:main-2}.

\begin{proposition}
\label{prop:bl-x2-y}
There is a $\bG$-equivariant isomorphism
\begin{equation}
\label{eq:bl-x2-y}
\Bl_{X_2}(Y) \cong \P_{X_1}(\cW_{X_1}),
\end{equation}
where~$\cW_{X_1}$ is a $\bG$-equivariant vector bundle on~$X_1$ of rank~$3$;
it fits into an exact sequence
\begin{equation}
\label{eq:cayley}
0 \to \cC_{X_1} \to \cW_{X_1} \to \cO_{X_1}(-H_1) \to 0.
\end{equation} 
\end{proposition}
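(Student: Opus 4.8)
The strategy is to use the description of $X_2 \subset Y$ as the zero locus of a regular section of $\cS^\vee$ from Lemma~\ref{lem:chain}, together with the identification $\cS\vert_{X_2} \cong \cU\vert_{X_2}$, to compute the blowup directly. First I would recall that for a smooth subvariety cut out by a regular section $s$ of a vector bundle $\cE$ of rank equal to the codimension, the blowup of the ambient variety along the subvariety embeds into $\P_Y(\cE^\vee)$ as the zero locus of the tautological section induced by $s$; in our case $\cE = \cS^\vee$ has rank $2$ and $X_2$ has codimension $2$ in $Y$, so $\Bl_{X_2}(Y) \hookrightarrow \P_Y(\cS)$ (using the convention that $\P_Y(\cS)$ parametrizes rank-$1$ quotients of $\cS$, or the appropriate dual convention). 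The exceptional divisor is $\P_{X_2}(N_{X_2/Y}) = \P_{X_2}(\cS^\vee\vert_{X_2}) = \P_{X_2}(\cU^\vee\vert_{X_2})$, and since $\tX = \P_{X_2}(\cU_{X_2})$ by~\eqref{eq:tx}, the exceptional divisor is identified (up to a twist/dualization) with the flag variety $\tX$.

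Next I would construct the fibration of $\Bl_{X_2}(Y)$ over $X_1$. The key point is that the projectivization $\P_Y(\cS)$ maps to $\P(\SS^\vee)$ via the globally generated bundle $\cS^\vee$ whose sections form the spinor representation $\SS$ (restricting to $V_1 \oplus \kk$ over $\bG$, as in the proof of Corollary~\ref{cor:y-cap-pv2}). Composing with the projection $\SS^\vee \to V_1^\vee$ should produce a rational map from $\Bl_{X_2}(Y)$ to $\P(V_1^\vee) \cong \P(V_1)$ landing in the quadric $X_1$; I would check that after blowing up this becomes a morphism $\Bl_{X_2}(Y) \to X_1$. To see that this morphism is a $\P^2$-bundle and to identify the bundle, I would argue $\bG$-equivariantly: $\Bl_{X_2}(Y)$ is a smooth projective $\bG$-variety with Picard number $2$ (generated by the pullback of $H_1$ and the exceptional divisor $E$), the morphism to $X_1$ is $\bG$-equivariant with $\bG$ acting transitively on $X_1$, so the fibers form a homogeneous bundle over $X_1$, hence $\Bl_{X_2}(Y) \cong \P_{X_1}(\cW_{X_1})$ for some rank-$3$ $\bG$-equivariant bundle $\cW_{X_1}$ determined by its fiber as a representation of the parabolic stabilizer of a point of $X_1$.

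Finally I would pin down the extension~\eqref{eq:cayley}. The relative hyperplane class of $\P_{X_1}(\cW_{X_1})$ is the restriction of $H_2$ (the pullback of the Plücker/ample class of $X_2$, via the blowdown $\Bl_{X_2}(Y) \to Y \subset \Gr(2,V_1)$), and the two distinguished divisor classes — the exceptional divisor $E = \P_{X_2}(\cU_{X_2}) = \tX$ sitting inside as a section-type subvariety, and the strict transform of a hyperplane section — should correspond to the two sub-line-bundles appearing in a filtration of $\cW_{X_1}$. Comparing with~\eqref{eq:bl-x2-x}, where $\Bl_{X_2}(X) \cong \P_{X_1}(\cO_{X_1}(-H_1)\oplus\cC_{X_1})$, I expect $\cW_{X_1}$ to be a (generally non-split) extension of $\cO_{X_1}(-H_1)$ by the Cayley bundle $\cC_{X_1}$, with the sub $\cC_{X_1}$ cut out by the locus where the spinor section degenerates and the quotient $\cO_{X_1}(-H_1)$ recording the remaining direction; the existence and non-triviality of the extension class in $\Ext^1_{X_1}(\cO_{X_1}(-H_1),\cC_{X_1}) = H^1(X_1, \cC_{X_1}(H_1))$ can be checked by a cohomology computation or by noting that a split $\cW_{X_1}$ would force $\Bl_{X_2}(Y) \cong \Bl_{X_2}(X)$ and hence $Y \cong X$, which is false.

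**Main obstacle.** The delicate step is the precise identification of the extension~\eqref{eq:cayley} — in particular, showing the sub-bundle is exactly $\cC_{X_1}$ (rather than some other $\bG$-equivariant rank-$2$ bundle) and controlling the twist by $\cO(-H_1)$. I expect the cleanest route is a weight/representation-theoretic computation: write down the fiber of $\cW_{X_1}$ at a base point of $X_1$ as a module over the maximal parabolic $P_1 \subset \bG$, decompose it into its Levi weights, and match these against the known weights of $\cC_{X_1}$ and $\cO_{X_1}(-H_1)$; the $P_1$-module structure (which need not be semisimple) then encodes the extension. Alternatively, one can track the Plücker/spinor line bundles through the blowup and use the projection formula, but this is more calculation-heavy. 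Everything else — the blowup-inside-a-projective-bundle description, the identification of the exceptional divisor with $\tX$, and the $\bG$-equivariant $\P^2$-bundle structure — follows formally from Lemma~\ref{lem:chain} and standard facts about blowups along zero loci of regular sections.
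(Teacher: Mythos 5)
Your starting point coincides with the paper's (Lemma~\ref{lem:chain} plus the standard fact that the blowup of the zero locus of a regular section of the rank-$2$ bundle $\cS^\vee$ sits in $\P_Y(\cS)$ as a relative hyperplane section), but the two substantive steps are left at the level of ``I would check'', and the replacement arguments you sketch for them do not suffice as stated. For the fibration over $X_1$: homogeneity of $X_1$ only tells you that all fibers of your map are isomorphic to one another; it neither identifies the fiber as $\P^2$ nor guarantees that an equivariant $\P^2$-fibration is the projectivization of a vector bundle (you would still need to compute a fiber and then linearize the projective action of the parabolic, or invoke $\Br(X_1)=0$). The paper avoids all of this by recognizing $\P_Y(\cS)$ as the flag variety $\OFl(2,3;V_1)$ and using its \emph{other} projection, which is already a $\P^2$-bundle $\P_{\OGr(3,V_1)}(\cW)$; since the section of $\cS^\vee$ cutting out $X_2$ spans the invariant summand $\kk\subset\SS=V_1\oplus\kk$, the divisor $\Bl_{X_2}(Y)\subset\P_Y(\cS)$ is exactly the preimage of the hyperplane section $X_1=\OGr(3,V_1)\cap\P(V_1)$ of the spinor quadric, hence equals $\P_{X_1}(\cW\vert_{X_1})$ on the nose, with fibers $\{U_2\subset U_3\}\cong\P^2$. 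If you carry out your verification that the projected spinor map lands in $X_1$ and compute its fibers, you will in effect be reconstructing this picture, so the flag-variety identification is the missing idea rather than an optional shortcut.

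The second gap is precisely the step you flag as the main obstacle, and the parabolic-weight computation you propose is unnecessary: the subbundle is produced directly from the exceptional divisor read from the $X_1$ side. Indeed $E=\P_{X_2}(\cN_{X_2/Y})\cong\P_{X_2}(\cU_{X_2})=\tX$, and by~\eqref{eq:tx-x1} one has $\tX\cong\P_{X_1}(\cC_{X_1})$; the resulting embedding $\P_{X_1}(\cC_{X_1})\hookrightarrow\P_{X_1}(\cW_{X_1})$ commutes with the projections to $X_1$ and matches relative hyperplane classes, hence is induced by a bundle embedding $\cC_{X_1}\hookrightarrow\cW_{X_1}$, and the quotient line bundle is pinned down as $\cO_{X_1}(-H_1)$ by comparing $\det\cC_{X_1}\cong\cO_{X_1}(-3H_1)$ with $\det\cW_{X_1}\cong\cO_{X_1}(-4H_1)$. (Your phrase about ``two sub-line-bundles'' is also off: the divisor $E$ corresponds to the rank-$2$ subbundle, not to a line subbundle.) Finally, non-splitness of~\eqref{eq:cayley} is not part of the Proposition (it is Remark~\ref{rem:nonsplit}), and your argument for it is itself incomplete: an isomorphism $\Bl_{X_2}(Y)\cong\Bl_{X_2}(X)$ does not immediately yield $Y\cong X$ without identifying the two contractions intrinsically; the paper instead observes that a splitting would embed the $5$-dimensional quadric $X_1$ into $Y\subset\Gr(2,V_1)$, which contains no quadrics of dimension greater than~$4$.
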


\begin{proof}
Consider the orthogonal isotropic partial flag variety and its two projections
\begin{equation*}
\xymatrix{
& \OFl(2,3;V_1) \ar[dl] \ar[dr] 
\\
\OGr(2,V_1) &&
\OGr(3,V_1)
}
\end{equation*}
The fibers of the first are nondegenerate conics; 
in fact, it is a $\P^1$-bundle, 
the corresponding vector bundle on~$\OGr(2,V_1)$ is precisely the spinor bundle~$\cS$ 
that was mentioned above, hence we have an isomorphism
\begin{equation*}
\OFl(2,3;V_1) \cong \P_{\OGr(2,V_1)}(\cS) = \P_Y(\cS).
\end{equation*}
Similarly, the second map is a $\P^2$-bundle.
We denote the corresponding $\SO(V_1)$-equivariant vector bundle of rank~$3$ on~$\OGr(3,V_1)$ by~$\cW$,
so that we have an isomorphism
\begin{equation}
\label{eq:ofl}
\OFl(2,3;V_1) \cong \P_{\OGr(3,V_1)}(\cW).
\end{equation}

Note that~$\OGr(3,V_1)$ is a smooth $6$-dimensional quadric in the projectivization
of the $8$-dimensional spinor representation~$\SS$ of~$\Spin(V_1)$ (the universal covering of~$\SO(V_1)$).
The restriction of this representation to the group~$\bG$ splits as~\mbox{$\SS = V_1 \oplus \kk$},
and the hyperplane section of~$\OGr(3,V_1) \subset \P(\SS)$ by~$\P(V_1)$ is the quadric~$X_1$.
Its preimage 
\begin{equation*}
\OFl(2,3;V_1) \times_{\OGr(3,V_1)} X_1 \cong \P_{X_1}(\cW\vert_{X_1})
\end{equation*}
is a relative (over~$Y = \OGr(2,V_1)$) hyperplane section of~$\P_Y(\cS)$;
therefore, it is isomorphic to the blowup of~$Y$ along the zero locus of the corresponding section of~$\cS^\vee$.
By Lemma~\ref{lem:chain} this zero locus is~$X_2 \subset Y$, 
hence we obtain the required isomorphism~\eqref{eq:bl-x2-y}, where~$\cW_{X_1} \coloneqq \cW\vert_{X_1}$.

To construct exact sequence~\eqref{eq:cayley} note that by Lemma~\ref{lem:chain} 
the normal bundle of~$X_2 \subset Y$ is
\begin{equation*}
\cN_{X_2/Y} \cong \cS\vert_{X_2}^\vee \cong \cU_{X_2}^\vee,
\end{equation*}
hence, using~\eqref{eq:tx} and~\eqref{eq:tx-x1}, we deduce that the exceptional divisor of the blowup is
\begin{equation*}
\P_{X_2}(\cN_{X_2/Y}) \cong 
\P_{X_2}(\cU^\vee_{X_2}) \cong 
\P_{X_2}(\cU_{X_2}) = 
\tX = 
\P_{X_1}(\cC_{X_1}).
\end{equation*}
Moreover, the induced embedding~$\P_{X_1}(\cC_{X_1}) \hookrightarrow \P_{X_1}(\cW_{X_1})$ 
is compatible with the projection to~$X_1$, 
and also with the relative hyperplane classes, 
hence it induces an embedding of vector bundles~$\cC_{X_1} \hookrightarrow \cW_{X_1}$.
The quotient is a line bundle, so it can be identified with~$\cO_{X_1}(-H_1)$ by the determinant computation,
taking into account the isomorphisms
\begin{equation*}
\det(\cC_{X_1}) \cong \cO_{X_1}(-3H_1)
\qquad\text{and}\qquad 
\det(\cW_{X_1}) \cong \cO_{X_1}(-4H_1),
\end{equation*}
that follow from~\eqref{eq:tx-x1} and~\eqref{eq:ofl} by a canonical class computation.
\end{proof}

\begin{remark}
\label{rem:nonsplit}
The crucial difference between~\eqref{eq:bl-x2-y} and~\eqref{eq:bl-x2-x} is that the extension~\eqref{eq:cayley}
defining the vector bundle~$\cW_{X_1}$ is \emph{non-trivial}. 
One way to see this is the following: 
if~\eqref{eq:cayley} were split then the embedding~$\cO_{X_1}(-H_1) \hookrightarrow \cW_{X_1}$
would give an embedding~$X_1 \hookrightarrow Y$ of the $5$-dimensional quadric~$X_1$, 
but it is well known that~$Y = \OGr(2,V_1)$ (and even~$\Gr(2,V_1)$ containing it)
does not contain quadrics of dimension greater than~$4$.
\end{remark}

\section{Proof of Theorem~\ref{thm:main}}

Recall that~$\cY = Y \times C$ and the map~$f_\cY \colon \cY \to C$ is the projection. 
Consider the subvariety
\begin{equation*}
X_2 \hookrightarrow Y = \cY_0 \hookrightarrow \cY
\end{equation*}
(where recall that~$\cY_0 \subset \cY$ denotes the central fiber)
and the blowup~$\pi_\cY \colon \Bl_{X_2}(\cY) \to \cY$.
This gives us the right half of the diagram~\eqref{diagram}.
To construct the left half we need two lemmas.

\begin{lemma}
\label{lem:divisors}
The scheme central fiber of~$\Bl_{X_2}(\cY) \xrightarrow{\ \pi_\cY\ } \cY \xrightarrow{\ f_\cY\ } C$ 
is the normal crossing divisor
\begin{equation*}
\P_{X_2}(\cU_{X_2} \oplus \cO_{X_2}(-H_2)) \ \bigcup_\tX\ \P_{X_1}(\cW_{X_1})
\end{equation*}
where the first component is the exceptional divisor of~$\pi_\cY$ 
and the second is the strict transform of~$\cY_0$.
\end{lemma}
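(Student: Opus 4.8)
The plan is to analyze the central fiber of the composition $f_\cY \circ \pi_\cY$ by comparing it with the blowup $\pi_\cY$ restricted over the special point. Since $f_\cY$ is flat and the divisor $\cY_0 = Y \times \{0\}$ is a Cartier divisor on $\cY$ containing the blown-up locus $X_2$, the scheme-theoretic preimage $(f_\cY \circ \pi_\cY)^{-1}(0)$ is the total transform of $\cY_0$, which decomposes as the sum of the strict transform $\widetilde{\cY_0}$ and a multiple of the exceptional divisor $E$ of $\pi_\cY$. First I would recall that $X_2 \subset Y$ is a smooth subvariety, so $\pi_\cY$ is the blowup along the smooth center $X_2 \times \{0\} \subset \cY$, and the exceptional divisor is $E \cong \P_{X_2}(\cN_{X_2/\cY})$; since $X_2$ sits inside the fiber $\cY_0$ over a smooth point of $C$, the conormal sequence gives $\cN_{X_2/\cY} \cong \cN_{X_2/Y} \oplus \cN_{0/C}|_{X_2} \cong \cU_{X_2}^\vee \oplus \cO_{X_2}$, using the identification $\cN_{X_2/Y} \cong \cU_{X_2}^\vee$ from Lemma~\ref{lem:chain}. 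Projectivizing and using the isomorphism $\P_{X_2}(\cU_{X_2}^\vee \oplus \cO_{X_2}) \cong \P_{X_2}(\cU_{X_2} \oplus \cO_{X_2}(-H_2))$ (twist by the line bundle $\det\cU_{X_2}^\vee = \cO_{X_2}(H_2)$, noting $\det\cU_{X_2}\cong\cO_{X_2}(-H_2)$), one identifies $E$ with the first component claimed.

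Next I would identify the strict transform $\widetilde{\cY_0}$. Since $f_\cY$ is smooth, $\cY_0$ is a smooth divisor through which the center $X_2$ passes, hence the strict transform of $\cY_0$ under $\pi_\cY$ is canonically isomorphic to the blowup $\Bl_{X_2}(Y)$; by Proposition~\ref{prop:bl-x2-y} this is $\P_{X_1}(\cW_{X_1})$, giving the second component. The two components meet along the exceptional divisor of $\Bl_{X_2}(Y) \to Y$, which (as computed in the proof of Proposition~\ref{prop:bl-x2-y}) is exactly $\tX = \P_{X_1}(\cC_{X_1}) = \P_{X_2}(\cU_{X_2})$, sitting inside $E = \P_{X_2}(\cU_{X_2}\oplus\cO_{X_2}(-H_2))$ as the sub-projective-bundle corresponding to the summand $\cU_{X_2}$, and inside $\widetilde{\cY_0}$ as its exceptional divisor. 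This confirms that the union is along $\tX$ as stated.

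Finally I must check the multiplicity and the normal crossing property. Since the blowup center $X_2 \times \{0\}$ has codimension $1$ inside the smooth divisor $\cY_0$ — indeed $\dim X_2 = 5$ and $\dim Y = 5$... wait, $\codim_Y X_2 = 0$? No: $\dim\OGr(2,7) = 2\cdot 5 - 3 = 7$ and $\dim X_2 = 5$, so $\codim_{\cY}(X_2\times\{0\}) = \dim\cY - 5 = 8 - 5 = 3$, while $\codim_{\cY_0}(X_2) = 7 - 5 = 2$ — the center is a Cartier divisor in neither. The function $f_\cY$ on $\cY$ has divisor $\cY_0$, which near a general point of $X_2$ is smooth and contains $X_2$ with multiplicity one; hence $\pi_\cY^*\cY_0 = \widetilde{\cY_0} + E$ with coefficient $1$ on $E$, so the central fiber as a divisor is $\widetilde{\cY_0} + E$, reduced. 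The normal crossing property is local along $\tX = \widetilde{\cY_0}\cap E$: since $\pi_\cY$ is the blowup of a smooth variety along a smooth center, $\Bl_{X_2}(\cY)$ is smooth, $E$ is a smooth divisor, $\widetilde{\cY_0}$ is smooth (being $\Bl_{X_2}(Y)$ with $Y$ smooth and $X_2$ smooth), and they meet transversally along $\tX$ because $\widetilde{\cY_0}$ is the strict transform of a smooth divisor meeting the center cleanly; a local-coordinate computation in the blowup chart confirms transversality. I expect the main obstacle to be the bookkeeping of the two projective-bundle structures on the exceptional divisor $E$ and pinning down precisely how $\tX$ embeds in each component — i.e., matching the relative hyperplane classes so that the twist $\P_{X_2}(\cU_{X_2}^\vee\oplus\cO)\cong\P_{X_2}(\cU_{X_2}\oplus\cO(-H_2))$ is compatible with the inclusion of $\tX$; the transversality and multiplicity statements are then routine.
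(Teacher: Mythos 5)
Your proposal is correct and follows essentially the same route as the paper: compute $\cN_{X_2/\cY}\cong\cU_{X_2}^\vee\oplus\cO_{X_2}$, twist to identify the exceptional divisor with $\P_{X_2}(\cU_{X_2}\oplus\cO_{X_2}(-H_2))$, identify the strict transform of $\cY_0$ with $\Bl_{X_2}(Y)\cong\P_{X_1}(\cW_{X_1})$ via Proposition~\ref{prop:bl-x2-y}, and recognize the intersection as $\P_{X_2}(\cN_{X_2/Y})\cong\tX$. Your extra remarks on multiplicity one and transversality (left implicit in the paper) are fine, and the momentary codimension slip is corrected and harmless.
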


\begin{proof}
Since~$\cY$ is the product~$Y \times C$, using Lemma~\ref{lem:chain} we compute the normal bundle
\begin{equation*}
\cN_{X_2/\cY} \cong \cN_{X_2/Y} \oplus \cO_{X_2} \cong \cU_{X_2}^\vee \oplus \cO_{X_2}.
\end{equation*}
This is a twist of~$\cU_{X_2} \oplus \cO_{X_2}(-H_2)$, 
hence we obtain the description of the first component of the central fiber of~$\Bl_{X_2}(\cY)$.
The second component is isomorphic to the blowup~$\Bl_{X_2}(Y)$, so Proposition~\ref{prop:bl-x2-y} applies.
Finally, the intersection of the components is the projectivization of~$\cN_{X_2/Y} \cong \cU_{X_2}^\vee$,
hence~\eqref{eq:tx} shows that it is isomorphic to~$\tX$.
\end{proof}

Now consider the trivial vector bundle~$\wedge^2V_1 \otimes \cO_C$ and filtration~\eqref{eq:v1v2} on its central fiber.
It induces a vector bundle~$\cV$ on~$C$ and a morphism~$\alpha \colon \wedge^2V_1 \otimes \cO_C \to \cV$ 
that fit into an exact sequence
\begin{equation*}
0 \xrightarrow\quad 
\wedge^2V_1 \otimes \cO_C \xrightarrow{\ \alpha\ } 
\cV \xrightarrow\quad 
V_2 \otimes \cO_{\{0\}} \xrightarrow\quad 0,
\end{equation*}
where~$\cO_{\{0\}}$ is the structure sheaf of the point~$\{0\} \in C$;
the central fiber of~$\cV$ is canonically an extension
\begin{equation}
\label{eq:v1v2-dual}
0 \xrightarrow\quad V_1^\vee \xrightarrow\quad \cV_{\{0\}} \xrightarrow\quad V_2 \xrightarrow\quad 0
\end{equation}
(opposite to~\eqref{eq:v1v2}), and the morphism~$\alpha_{\{0\}}$ 
factors as~$\wedge^2V_1 \xrightarrow{\ \lambda\ } V_1^\vee \xrightarrow\quad \cV_{\{0\}}$.
Note that the morphism~$\alpha$ induces a birational map~$\P_{C}(\wedge^2V_1 \otimes \cO_C) \dashrightarrow \P_{C}(\cV)$
of projective bundles over~$C$.

\begin{lemma}
\label{lem:et}
Consider the embeddings
\begin{equation*}
\P(V_2) \subset \P(\wedge^2V_1) \hookrightarrow \P_{C}(\wedge^2V_1 \otimes \cO_C)
\qquad\text{and}\qquad 
\P(V_1^\vee) \subset \P(\cV_{\{0\}}) \hookrightarrow \P_{C}(\cV)
\end{equation*}
into the central fibers of the projective bundles.
The birational map~$\alpha$ induces an isomorphism of blowups
\begin{equation*}
\Bl_{\P(V_2)}(\P_{C}(\wedge^2V_1 \otimes \cO_C)) \cong \Bl_{\P(V_1^\vee)}(\P_{C}(\cV))
\end{equation*}
over~$C$ such that the exceptional divisor of each side 
coincides with the strict transform of the central fiber of the projective bundle of the other side.
\end{lemma}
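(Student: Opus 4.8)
The plan is to exhibit the asserted isomorphism as the common blow-up that resolves the birational map induced by $\alpha$, reducing everything to an explicit local computation near $0\in C$ and to the universal property of blow-ups. Write $P\coloneqq\P_{C}(\wedge^2V_1\otimes\cO_C)$, $P'\coloneqq\P_{C}(\cV)$, $A\coloneqq\P(V_2)\subset P$, $B\coloneqq\P(V_1^\vee)\subset P'$, and let $\pi_A\colon\Bl_A(P)\to P$, $\pi_B\colon\Bl_B(P')\to P'$ be the blow-down maps. Over $C\setminus\{0\}$ the morphism $\alpha$ is an isomorphism of vector bundles, hence identifies $P$ with $P'$ there; since $A$ and $B$ both lie over $0$, this already identifies the two blow-ups over $C\setminus\{0\}$, so it suffices to build a compatible isomorphism over a Zariski neighbourhood of $0$ (the two will agree on the overlap, because the one we build will, by construction, extend $\alpha$). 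Shrinking $C$, I may assume $\cV$ is free and $0$ is the zero locus of a function $t$; splitting \eqref{eq:v1v2} as a sequence of vector spaces $\wedge^2V_1=V_2\oplus U$ with $U\cong V_1^\vee$ and putting $\alpha$ in Smith normal form over the local ring $\cO_{C,0}$ (the cokernel $V_2\otimes\cO_{\{0\}}$ forces the elementary divisors to be seven units and fourteen copies of $t$), I may assume $\cV\cong\cO_C^{\oplus 21}$ with $\alpha=\operatorname{diag}(\id_U,\,t\cdot\id_{V_2})$. Then $P$ and $P'$ are both $\P^{20}_C$, with homogeneous coordinates $[y:x]$ ($y_i$ dual to $U$, $x_j$ dual to $V_2$) and $[\eta:\xi]$ respectively, one has $A=\{t=0,\ y=0\}$ and $B=\{t=0,\ \xi=0\}$, and $\alpha$ induces the birational map $[y:x]\mapsto[y:tx]$ with inverse $[\eta:\xi]\mapsto[t\eta:\xi]$ (so $\tilde\alpha^{*}t=t$ and $\tilde\alpha^{*}\xi_j=tx_j$, $\tilde\alpha^{*}\eta_i=y_i$).

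First I would check that the base scheme of $\alpha\colon P\dashrightarrow P'$ is exactly $A$: the base ideal is generated by $y_1,\dots,y_7,tx_1,\dots,tx_{14}$, which on each chart $\{x_i\neq0\}$ equals $(t,y_1,\dots,y_7)=\cI_A$ (since $(tx_1,\dots,tx_{14})=(t)$ there) and on each chart $\{y_j\neq0\}$ is the unit ideal, while $A$ misses those charts; hence it is $\cI_A$ globally. Therefore $\alpha$ extends to a morphism $\tilde\alpha\colon\Bl_A(P)\to P'$, and symmetrically $\alpha^{-1}$ extends to $\widetilde{\alpha^{-1}}\colon\Bl_B(P')\to P$. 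The crucial claim is then that $\tilde\alpha^{*}\cI_B$ is invertible, equal to the ideal of the strict transform $\widetilde{P_0}$ of the central fibre $P_0=\P(\wedge^2V_1)$ inside $\Bl_A(P)$. This I would verify on the charts of $\Bl_A(P)$ obtained by blowing up $(t,y_1,\dots,y_7)$: on the ``$t$-chart'' ($y_j=t\eta_j$) the map $\tilde\alpha$ lands in a chart $\{\xi_i\neq0\}$ of $P'$, which is disjoint from $B$, so $\tilde\alpha^{*}\cI_B=\cO$; on each ``$y_k$-chart'' ($t=y_ks$, $y_j=y_k\eta_j$) the map lands in a chart $\{\eta_k\neq0\}$ meeting $B$, and there $\tilde\alpha^{*}(t,\xi_1,\dots,\xi_{14})=(y_ks,s,sx_1,\dots)=(s)$, the ideal of $\{s=0\}=\widetilde{P_0}$ (since $\{t=0\}$ pulls back to $E_A\cup\{s=0\}$ with $E_A=\{y_k=0\}$ the exceptional divisor). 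By the universal property of the blow-up, $\tilde\alpha=\pi_B\circ g$ for a morphism $g\colon\Bl_A(P)\to\Bl_B(P')$ over $C$; symmetrically $\widetilde{\alpha^{-1}}=\pi_A\circ h$ for a morphism $h\colon\Bl_B(P')\to\Bl_A(P)$, with $h^{*}\cI_A=\cO(-\widetilde{P'_0})$.

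To finish, the composites $h\circ g$ and $g\circ h$ are morphisms over $C$ restricting to the identity over the dense open $C\setminus\{0\}$ (where $g$, $h$ are the isomorphisms induced by $\alpha^{\pm1}$), hence equal the identity because the blow-ups are reduced and separated; thus $g$ is an isomorphism over $C$, which by construction restricts to $\alpha$ over $C\setminus\{0\}$ and so glues with the evident isomorphism there. For the divisors: $\pi_B^{*}\cI_B=\cO(-E_B)$ together with $\pi_B\circ g=\tilde\alpha$ gives $\cO(-g^{-1}(E_B))=g^{*}\cO(-E_B)=\tilde\alpha^{*}\cI_B=\cO(-\widetilde{P_0})$, so $g$ carries $\widetilde{P_0}$ onto the exceptional divisor $E_B$; applying the same computation to $h=g^{-1}$ and $\cI_A$ shows $g$ carries the exceptional divisor $E_A$ onto $\widetilde{P'_0}$. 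This is precisely the asserted matching ``exceptional divisor of one side $=$ strict transform of the central fibre of the other''.

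The only place where something genuinely happens is the middle step: checking that the base scheme of $\alpha$ becomes a Cartier divisor after blowing up $A$, and identifying it with $\widetilde{P_0}$. Once the Smith-normal-form coordinates are fixed this is routine, but one must keep careful track of which chart of $\Bl_A(P)$ maps into which chart of $P'$ — the exceptional ``$t$-charts'' landing away from $B$ and the ``$y_k$-charts'' landing in the charts of $P'$ meeting $B$, where the pulled-back ideal of $B$ collapses to a single coordinate. I expect this bookkeeping, rather than any conceptual difficulty, to be the main obstacle.
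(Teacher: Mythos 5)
Your proof is correct and is exactly the ``standard argument'' the paper invokes: the paper's proof of this lemma simply says it is an elementary transformation of projective bundles and that the argument is standard, and your chart computation (Smith normal form $\alpha=\operatorname{diag}(\id,t\cdot\id)$, base scheme $=A$, $\tilde\alpha^{*}\cI_B$ becoming the ideal of the strict transform of the central fiber, universal property of the blow-up in both directions) is precisely that standard argument written out. The only point worth a word when writing it up is that the trivialization achieving the normal form can be chosen so that $A=\{t=0,y=0\}$ and $B=\{t=0,\xi=0\}$ (take a constant splitting $\wedge^2V_1=U\oplus V_2$ and trivialize $\cV$ by $(\alpha|_U,\,t^{-1}\alpha|_{V_2})$, noting $B=\P(\operatorname{im}\alpha_{\{0\}})$), which is immediate but should be said.
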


\begin{proof}
This is an elementary transformation of projective bundles,
so the argument is standard.
\end{proof}

Now we construct the left half of~\eqref{diagram}.
Consider the natural embedding
\begin{equation*}
\cY = 
Y \times C =
\OGr(2,V_1) \times C \hookrightarrow 
\P(\wedge^2V_1) \times C = 
\P_{C}(\wedge^2V_1 \otimes \cO_C).
\end{equation*}
By Corollary~\ref{cor:y-cap-pv2} the strict transform of~$\cY$ 
under the blowup~$\Bl_{\P(V_2)}(\P_{C}(\wedge^2V_1 \otimes \cO_C))$ of Lemma~\ref{lem:et}
is isomorphic to~$\Bl_{X_2}(\cY)$.
Consider the composition
\begin{equation*}
\Bl_{X_2}(\cY) \hookrightarrow 
\Bl_{\P(V_2)}(\P_{C}(\wedge^2V_1 \otimes \cO_C)) \cong
\Bl_{\P(V_1^\vee)}(\P_{C}(\cV)) \to 
\P_{C}(\cV)
\end{equation*}
of the induced embedding with the isomorphism from Lemma~\ref{lem:et} and the obvious contraction.
We denote by~$\cX \subset \P_{C}(\cV)$ its image, and consider the obtained morphisms
\begin{equation*}
\Bl_{X_2}(\cY) \xrightarrow{\ \pi_\cX\ } \cX \xrightarrow{\ f_\cX\ } C.
\end{equation*}
It remains to prove that~$f_\cX$ is smooth, its central fiber is isomorphic to~$X$,
and~$\pi_\cX$ is the blowup of~$X_1 \subset \cX$.

By Lemma~\ref{lem:et} the morphism~$\Bl_{\P(V_2)}(\P_{C}(\wedge^2V_1 \otimes \cO_C)) \to \P_{C}(\cV)$
contracts the strict transform of the central fiber of~$\P_{C}(\wedge^2V_1 \otimes \cO_C)$
and is an isomorphism on its complement, 
it follows that~$\pi_\cX$ contracts the strict transform of the central fiber of~$\cY$
and is an isomorphism on its complement.

The restriction of~$\pi_\cX$ to the exceptional divisor~$\P_{X_2}(\cU_{X_2} \oplus \cO_{X_2}(-H_2))$ of~$\pi_\cY$
is the morphism given by the relative hyperplane class, hence its image is the horospherical variety~$X$.
This is the scheme central fiber of~$f_\cX$, so smoothness of~$X$ implies that~$f_\cX$ is smooth.

The restriction of~$\pi_\cX$ to the strict transform~$\Bl_{X_2}(Y) \cong \P_{X_1}(\cW_{X_1})$
coincides by construction with the morphism from Proposition~\ref{prop:bl-x2-y},
therefore~$\pi_\cX(\Bl_{X_2}(Y)) = X_1 \subset X$.

Finally, the fact that~$\pi_\cX$ is the blowup of~$X_1 \subset X = \cX_0 \subset \cX$ follows from~\cite[Lemma~2.5]{K18}.

\section{Proof of Theorem~\ref{thm:main-2}}

Recall the exact sequence~\eqref{eq:cayley} and let 
\begin{equation*}
\upepsilon \in \Ext^1(\cO_{X_1}(-H_1), \cC_{X_1})
\end{equation*}
denote its extension class.
Recall that~$\upepsilon \ne 0$ by Remark~\ref{rem:nonsplit}.

Let~$\cL$ be the line bundle of degree~$1$ on~$C$ associated with the point~$\{0\} \in C$ 
and let~$s_0 \in \rH^0(C,\cL)$ be the corresponding global section.
We define a vector bundle~$\tcW$ on~$X_1 \times C$ as an extension
\begin{equation}
\label{eq:cw}
0 \to \cC_{X_1} \boxtimes \cL \to \tcW \to \cO_{X_1}(-H_1) \boxtimes \cO_{C} \to 0
\end{equation}
with extension class
\begin{equation*}
\upepsilon \otimes s_0 \in 
\Ext^1(\cO_{X_1}(-H_1), \cC_{X_1}) \otimes \rH^0(C, \cL) \cong
\Ext^1(\cO_{X_1}(-H_1) \boxtimes \cO_{C}, \cC_{X_1} \boxtimes \cL).
\end{equation*}
Thus, over~$\{0\}$ the extension splits, so that
\begin{align}
\label{eq:cw-0}
\tcW\vert_{X_1 \times \{0\}} &\cong \cO_{X_1}(-H_1) \oplus \cC_{X_1},\\
\intertext{while for each~$0 \ne t \in C$ the extension is isomorphic to~\eqref{eq:cayley}, so that}
\label{eq:cw-t}
\tcW\vert_{X_1 \times (C \setminus \{0\})} &\cong \cW_{X_1} \boxtimes \cO_{C \setminus \{0\}}.
\end{align}

Now consider the projective bundle~$\P_{X_1 \times C}(\tcW)$
and its relative hyperplane class~$H$.
Since both vector bundles~$\cC_{X_1}^\vee$ and~$\cO_{X_1}(H_1)$ are globally generated,
the linear system~$|H|$ is base point free on each fiber over~$C$,
therefore it defines a morphism
\begin{equation*}
\P_{X_1 \times C}(\tcW) \to \P_{C}(\cV')
\end{equation*}
to an appropriate projective bundle over~$C$ 
(in fact, this bundle can be identified with the bundle~$\P_C(\cV)$ constructed in the proof of Theorem~\ref{thm:main}).
We denote the image by~$\cX$ and claim that it is smooth over~$C$ 
with fibers~$X$ and~$Y$ over~$\{0\} \in C$ and~$C \setminus \{0\}$, respectively, and that
\begin{equation*}
\P_{X_1 \times C}(\tcW) \cong \Bl_{X_2 \times C}(\cX).
\end{equation*}
Indeed, the fiber~$\cX_t$ of~$\cX$ over a point~$t \in C$ is the image of~$\P_{X_1}(\tcW_t)$
under the morphism given by the relative hyperplane class.
When~$t = 0$ by~\eqref{eq:cw-0} this agrees with the definition~\eqref{eq:bl-x2-x} of the horospherical variety, so that
\begin{equation*}
\cX_0 \cong X.
\end{equation*}
On the other hand, for~$t \ne 0$ we apply~\eqref{eq:cw-t} and Proposition~\ref{prop:bl-x2-y} and deduce that
\begin{equation*}
\cX_t \cong Y.
\end{equation*}
Finally, note that the exceptional locus of the morphism~$\uprho \colon \P_{X_1 \times C}(\tcW) \to \cX$
is the projective subbundle
\begin{equation*}
\P_{X_1 \times C}(\cC_{X_1} \boxtimes \cL) \cong
\P_{X_1}(\cC_{X_1}) \times C \cong
\tX \times C \cong 
\P_{X_2}(\cU_{X_2}) \times C 
\end{equation*}
and it is contracted by~$\uprho$ onto the subvariety~$X_2 \times C \subset \cX$.

\begin{remark}
One can obtain the vector bundle~$\tcW$ on~$X_1 \times C$ 
from the (trivial over~$C$) vector bundle~\mbox{$\cW_{X_1} \boxtimes \cO_{C}$}
and the filtration~\eqref{eq:cayley} of its central fiber 
by an elementary transformation, analogous to that of Lemma~\ref{lem:et}.
Using this one can merge the constructions of Theorem~\ref{thm:main} and~\ref{thm:main-2}.
\end{remark}

\section{Derived categories}

There are several ways in which the constructions of Theorems~\ref{thm:main} and~\ref{thm:main-2} can be applied.
For instance, one can relate derived categories of~$X$ and~$Y$.
Recall that both have a full exceptional collection:
in the case of~$X$ that has been proved in~\cite[Theorem~8.20]{GPPS} 
and in the case of~$Y$ in~\cite[Theorem~7.1]{K08};
moreover, \cite[Remark~8.22]{GPPS} points out that the collections have the same structure.

It turns out that these two collections can be glued.
In fact, one can define a \emph{relative exceptional collection} on~$\cX$ over~$C$,
that will coincide with the collection from~\cite[Theorem~7.1]{K08} over~$C \setminus \{0\}$
and with the collection from~\cite[Theorem~8.20]{GPPS} on the central fiber.

Explicitly, recall the notation of diagram~\eqref{diagram} and denote additionally by
\begin{itemize}
\item 
$i_1 \colon E_1 \hookrightarrow \Bl_{X_1}(\cX)$ the embedding of exceptional divisor of~$\pi_\cX$, and
\item 
$i_2 \colon E_2 \hookrightarrow \Bl_{X_2}(\cY)$ the embedding of exceptional divisor of~$\pi_\cY$.
\end{itemize}
Recall from Lemma~\ref{lem:divisors} that~$E_1 \to X_1$ and~$E_2 \to X_2$ are $\P^2$-bundles
and the intersection
\begin{equation*}
E \coloneqq E_1 \cap E_2 \cong \tX
\end{equation*}
is transverse.
Denote~$\cU_\cY \coloneqq \cU \boxtimes \cO_C$ and~$\cS_\cY \coloneqq \cS \boxtimes \cO_C$.
Then one can check that on~$\Bl_{X_1}(\cX) \cong \Bl_{X_2}(\cY)$ there are distinguished triangles
\begin{equation}
\pi_\cX^*\cS_\cX \to 
\pi_\cY^*\cS_\cY \to 
i_{1*}\cO_{E_1}(-E)
\qquad\text{and}\qquad
\pi_\cY^*\cU_\cY \to 
\pi_\cX^*\cU_\cX \to 
i_{2*}\cO_{E_2}(-H_2 - 2E)
\end{equation} 
defining objects~$\cS_\cX$ and~$\cU_\cX$ in~$\Db(\cX)$. 
Note that~$E_1$ and~$E_2$ are both supported over~$\{0\} \in C$,
hence over~$C \setminus \{0\}$ these triangles simplify to isomorphisms 
between the restrictions of~$\cS_\cX$ and~$\cS_\cY$, and~$\cU_\cX$ and~$\cU_\cY$, respectively.
On the other hand, the restrictions to the central fiber~$\cX_0 \cong X$ can be identified as
\begin{equation*}
\cS_\cX\vert_X \cong \mathbb{U}
\qquad\text{and}\qquad 
\cU_\cX\vert_X \cong \widehat{\mathbb{S}}
\end{equation*}
where the right-hand sides are defined in~\cite[Propositions~8.4, 8.7 and Lemma~8.12]{GPPS}.

One can also prove that there is a $C$-linear semiorthogonal decomposition
\begin{equation*}
\Db(\cX) = \langle \cA, \cA(H), \cA(2H), \cA(3H) \rangle
\end{equation*}
where~$H$ is the relative hyperplane class for~$\cX$ over~$C$ and 
\begin{equation*}
\cA = \langle \cS_\cX \otimes \Db(C), \cU_\cX \otimes \Db(C), \cO_\cX \otimes \Db(C) \rangle.
\end{equation*}
Moreover, after base change to~$\{0\}$ and~$C \setminus \{0\}$ (see~\cite{K11}) these decompositions
coincide with the respective decompositions of~$\Db(X)$ and~$\Db(Y \times (C \setminus \{0\}))$.

\subsection*{Acknowledgement}

I thank Sasha Samokhin for the discussion where Theorem~\ref{thm:main} was discovered
and Jun-Muk Hwang and Nicolas Perrin for useful comments about the preliminary version of this note.

\bibliography{hsg2}
\bibliographystyle{alpha}

\end{document}